\newcommand{\RomanNumeralCaps}[1]
  {\MakeUppercase{\romannumeral #1}}
\DeclareMathOperator{\tr}{\mathrm{tr}}
\DeclareMathOperator{\F}{\mathrm{\mathbb{F}}}
\newtheorem{thm}{Theorem}
\newtheorem{proof}{Proof}
\newtheorem{definition}{Definition} 
\newtheorem{lemma}{Lemma} 
\newtheorem{theorem}{Theorem} 
\newtheorem{remark}{Remark} 
\newtheorem{lem}{Lemma}
\newtheorem{prop}{Proposition}
\newtheorem{cor}{Corrollary}
\newtheorem{exm}{Example}
\newtheorem{rem}{Remark}
\renewcommand{\leq}{\leqslant}
\renewcommand{\geq}{\geqslant}
\renewcommand{\phi}{\varphi}
\renewcommand{\l}{\bigl\langle} \renewcommand{\r}{\bigr\rangle}
\begin{document}
\pagestyle{fancy}
\fancyhead{}
\fancyhead[L]{\footnotesize{Analytical formulas of roots in $ESL_3(F_p)$}}
\fancyhead[R]{\footnotesize{Ruslan Skuratovskii}}
\fancyfoot{}
\fancyfoot[L]{\footnotesize{Extended  Special Linear group $ESL_3(\mathbb{Z})$}}
\fancyfoot[R]{\footnotesize{ June, 2025}}
\renewcommand{\footrulewidth}{0.1 mm}

\begin{center}
\textbf{Involutive minimal generating sets of Extended  Special Linear group $ESL_5(\mathbb{Z})$, $ESL_5(F_p)$, formulas of roots in $ES{{L}_{2}}({{\mathbb{F}}_{p}})$, $ES{{L}_{2}}({{\mathbb{Z}}})$ and $S{{L}_{3}}({{\mathbb{Z}}})$ \RomanNumeralCaps{2}.
}
\vspace{\baselineskip}

\author{R. Skuratovskii\\
{ruslan.skuratovskii@imath.kiev.ua} \\
Institute of applied mathematics and mechanics \\
of the NASU, Kyiv, Ukraine, skuratovskii@nas.gov.ua}

{Skuratovskii  R.V. \\
{ruslan.skuratovskii@imath.kiev.ua} \\
Institute of applied mathematics and mechanics \\
of the NASU, Kiev, Ukraine, skuratovskii@nas.gov.ua}
\end{center}

{\bf Abstract }
The size of minimal systems of generators and these systems themselves for groups $ESL_n \left[ \mathbb{Z} \right]$ and $ESL_n \left( \mathbb{F}_p \right)$ were found.
 Moreover we obtain triples of involutions with\textit{\textbf{ two commuting involu\-tions}} generating $ESL_5 \left[ \mathbb{Z} \right]$, $ESL_5 \left[ \mathbb{Z} \right]$ relatively,  that are Mazurov triples.

An analytical formula of root in $SL(3, \mathbb{Z})$ is found, the analytical formula for $4$-th power root in $SL(2, \mathbb{Z}$) is found too. The $n$-th power root of elements of $GL(2, \mathbb{Z})$ is found in the form of recursive formula. 

The roots existing problem for different classes of matrix --- simple and semisimple matrixes from $SL_2({\mathbb{F}})$, $ SL_2({\mathbb{Z}})$ and $ GL_2({\mathbb{F}})$ are solved.

We introduce the concept of extended special linear group $ESL_{2k+1}(\mathbb{F})$, which is generalization of the matrix group $SL_{2k+1}(\mathbb{F})$, where $\mathbb{F}$ is arbitrary perfect field.
Such matrix groups are widely used in non-commutative cryptography, as well as in the key exchange protocol \cite{SkuKeyExchange} and in post-quantum cryptography, see \cite{NTRU}.

\textbf{Key words}:     Extended special linear group, minimal involutive generating set with two commuting involutions, analytical formula of root in a matrix group. \\

\section{Introduction}
In this research we continue our previous investigation \cite{SkuESL}. We generalize the group of unimodular matrices \cite{Amit, Un, AutofUnimod} and find its structure. For this goal we propose one extension of the special linear group. Groups generated by three involutions, two of which are permutable, have long been of interest in the theory of matrix groups \cite{Maz}, for instance such generating set was researched for $S{{L}_{2}}({{\mathbb{Z}+ i\mathbb{Z}}})$. But for size of matrix 3 on 3 this is impossible for some groups. We research this question for $ES{{L}_{2}}({{\mathbb{Z}}})$.

The question of study an automorphism group of lattice $\mathbb{Z}^n$ is very important because it is an algebraic foundations of lattice-based cryptography \cite{LatCry, NTRU, BookLatCry}. 
Such matrix groups are widely used in non-commutative cryptography, as well as in the key exchange protocol \cite{SkuKeyExchange} and in post-quantum cryptography, see \cite{NTRU}.

Group of automorphisms of $\mathbb{Z}^n$ is much bigger than just signed permutations, it's infinite (for $n \geq 2$) and consists of all integer matrices whose determinant is 1 or -1.
An automorphism of this lattice be an orthogonal transformation (i.e. linear transformation that preserves the inner product).

The automorphism group of $\mathbb{Z}^n$ is not finite when $n \geq 2$, since it consists of all $n\times n$ matrices with integer coefficients and determinant 1 or -1 that means this automorphism group is $ES{{L}_{n}}[{{\mathbb{Z}}}]$. 

In the Diffie–Hellman protocol, public parameters, private secret values, public keys, and a final shared secret key are distinguished. In this protocol public parameters are elements of a commutative group. In the new key exchange protocol \cite{SkuKeyExchange} which is generalization of Diffie–Hellman protocol, these will be elements of a commutative subgroup or a commutative subset of elements of a non-commutative group. This is why it is important to find commutative involutions in the group. Note that the fastest computations are possible with elements of order 2.

An analytical formula of root in $SL(3, \mathbb{Z}$) is found, recursive formula for $n$-th power root in $SL(2, \mathbb{Z}$) is found too.

We denote iff --- necessary and sufficient condition, 
e.v.  --- eigenvalue.
Let $\mu_A$ be minimal polynomial of $A$.

Recall that matrix $A$ is
called semisimple if $\mu_A$ is a product of distinct monic irreducible and
separable polynomials; if moreover all these irreducible polynomials
have degree 1, then $A$ is called split semisimple or diagonalizable. The \textit{trace} of a square matrix is the sum of its diagonal entries.

\section{Concept of $ESL_3(\mathbb{F}_p)$ and $ESL_3(\mathbb{Z})$}
We recall concept of $ES{{L}_{2}}\left( \mathbb{F}_p \right)$ introduced in our previous work \cite{SkuESL}.
\begin{definition}
 The set of matrices
\begin{equation}\label{ESL}
 \left\{ {{M}_{i}}:Det({{M}_{i}})=\pm 1,  {M}_{i} \in GL_2(\mathbb{F}_p) \right\} 
 \end{equation}
forms extended special linear group and is denoted by ${ESL}_2(\mathbb{F}_p)$.
\end{definition}

Let $SL_3(\mathbb{F}_p)$  denotes the special linear group of degree 3 over finite field $\mathbb{F}_p$.

\textbf{\bf Definition 1.}
\textit{ The set of matrices \[\left\{ {{M}_{i}}:Det({{M}_{i}})=\pm 1,  {M}_{i} \in GL_3(\mathbb{F}_p) \right\} \] forms \textbf{extended special linear group}  in $GL_3(\mathbb{F}_p)$ and is denoted by  ${ESL}_3(\mathbb{F}_p)$.}

By the transvection $t_{ij}$ we mean the sum $E+e_{ij}$, where $e_{ij}$ is a matrix unit with 1 only in  intersection of $i$-th row and $j$-th column the rest elements are 0.   For instance     $t_{12}=\left( \begin{matrix}
   1 & 1 & 0  \\
   0 & 1 & 0  \\
   0 & 0 & 1  \\
\end{matrix} \right)$.

Denote a permutation matrix of order 3 by ${{P}_{3}}$ and the transvection \cite{Klyach} by $tr_{12}$ of group $SL_3(\mathbb{F}_p)$.
Suppose ${{D}_{123}}=\left( \begin{matrix}
   -1 & 0 & 0  \\
   0 & -1 & 0  \\
   0 & 0 & -1  \\
\end{matrix} \right)$ be additional generator extending $SL\left( 3, \mathbb{F}_p \right)$ to $ESL\left( 3,\mathbb{F}_p \right)$.


\textbf{\bf Definition 2.}
\textit{ The set of matrices \[\left\{ {{M}_{i}}:Det({{M}_{i}})=\pm 1,  {M}_{i} \in GL_3(\mathbb{F}_p) \right\} \] forms \textbf{extended special linear group} over $\mathbb{F}_p$ and is denoted by ${ESL}_3(\mathbb{F}_p)$.}

\textbf{\bf Definition 3.}
\textit{ The set of matrices \[\left\{ {{M}_{i}}:Det({{M}_{i}})=\pm 1,  {M}_{i} \in GL_3[\mathbb{Z}] \right\} \] forms \textbf{extended special linear group} over $\mathbb{Z}$ and is denoted by  ${ESL}_3[\mathbb{Z}]$.}

To construct split extension of $SL_3\left( \mathbb{F}_p \right)$ we consider the following class of matrices
with determinant $-1$,
but do not centralizing the group $S{{L}_{3}}\left( \mathbb{F}_p \right)$ and one exception that's diagonal matrix.
$${{D}_{1}}=\left( \begin{matrix}
   -1 & 0 & 0  \\
   0 & 1 & 0  \\
   0 & 0 & 1  \\
\end{matrix} \right), 
 {{D}_{2}}=\left( \begin{matrix}
   1 & 0 & 0  \\
   0 & -1 & 0 \\
   0 & 0 & 1  \\
\end{matrix} \right),
         {{D}_{3}}=\left( \begin{matrix}
   1 & 0 & 0  \\
   0 & 1 & 0  \\
   0 & 0 & -1  \\
\end{matrix} \right).$$

Based on the above, we conclude that structure of group generated as extension of $S{{L}_{3}}\left( \mathbb{F}_p \right)$ by $\left\langle {{D}_{1}} \right\rangle $ is semidirect product 
$$\left\langle {{D_{1}}} \right\rangle \ltimes SL_{3}\left( \mathbb{F}_p \right)\simeq ES{{L}_{3}}\left( \mathbb{F}_p \right)$$
with kernel $S{{L}_{3}}\left( \mathbb{F}_p \right)$.

The role of a complement subgroup for the kernel $SL_3(\mathbb{F})$ in the split group $ESL_3(\mathbb{F})$ can also be played by subgroups formed by $D_2$ and $D_3$ therefore $\left\langle {{D_{2}}} \right\rangle \ltimes SL_{3}\left( \mathbb{F}_p \right)\simeq ES{{L}_{3}}\left( \mathbb{F}_p \right)$ and $\left\langle {{D_{3}}} \right\rangle \ltimes SL_{3}\left( \mathbb{F}_p \right)\simeq ES{{L}_{3}}\left( \mathbb{F}_p \right)$.

A diagonal matrix in which one diagonal element is equal to $-1$ and the rest are equal to 1 is called an \textit{\textbf{elementary diagonal}} matrix.

But if we use the diagonal matrix $D_{1,2,3}=D_1D_2 D_3$, which centralizes $SL_3 (\mathbb{F}_p)$ as a scalar matrix, then the semidirect product degenerates into a direct product
$$\langle D_{1,2,3} \rangle \times SL_3 (\mathbb{F}_p) \simeq ESL_3 (\mathbb{F}_p).$$

Let $D$ be a subgroup generated by elementary diagonal matrices from $SL(n,p)$ and $D^{-}$ be its adjacency class whose matrices $D_i$ have negative determinants.

\textbf{Remark.} {\sl 
If $n=2k+1$, then there exist exactly $2^{n-1}$ diagonal extensions by subgroup generated by a matrix $ d_i \in D^{-}$ of the group $SL(n, \mathbb{F}_p)$ to the group $ESL(n, \mathbb{F}_p)$.

This extension takes the form $\left\langle {{D_{i}}} \right\rangle \ltimes SL_{3}\left( \mathbb{F}_p \right)$ in the case $D_i$ is not a scalar matrix and form the trivial central extension $\langle D_i \rangle \times SL_3 (\mathbb{F}_p) \simeq ESL_3 (\mathbb{F}_p)$ if $D_i$ is a scalar matrix.}


\textbf{Proof.}
To generate diagonal matrix with odd  number of -1 by using elementary diagonal matrices $D_i$ we can multiplicate odd number of them. The quantity of diagonal matrix with odd (even) number of -1 on diagonal is sum of combinations of $n$ by odd numbers means coordinates of -1 on diagonal.
In order to compute a sum of ${n^{} \choose 2l}$ consider a sum of $(1+1)^n=2^n$ and $(1-1)^n=0$ which is $ \left({n^{} \choose 0}+{n^{} \choose 2}+ {n^{} \choose 4}+ \ldots \right)$ and divide it by 2.
 
\begin{equation}\label{placement}
\left(\sum_{l=0}^{{n^{}}}{n^{} \choose 2l}(1+(-1)^l)\right):2 =
2^{n^{}-1}.    
\end{equation}
To calculate the sum of odd combinations, we subtract the sum of even combinations of $n$ elements that is $2^{n^{}-1}$ from $2^n$.

By the transvection $t_{ij}$ we mean the sum $E+e_{ij}$, where $e_{ij}$ is a matrix unit with 1 only in  intersection of $i$-th row and $j$-th column the rest elements are 0.   

Denote a permutation matrix of order 3 by ${{P}_{3}}$ and the transvection \cite{SkuESL} by $tr_{12}$ of group $SL_3(\mathbb{F}_p)$.
Suppose ${{D}_{1,2,3}}=\left( \begin{matrix}
   -1 & 0 & 0  \\
   0 & -1 & 0  \\
   0 & 0 & -1  \\
\end{matrix} \right)$ be additional generator extending $SL_3\left( , \mathbb{F}_p \right)$ to $ESL_3 \left( \mathbb{F}_p \right)$.

\textbf{Proposition 1.}
{\sl  Minimal generating set for $ESL_3(\mathbb{Z})$ consists of 2 generators: 
\begin{center}
  $P=\left( \begin{matrix}
   0 & -1 & 0  \\
   0 & 0 & 1  \\
   1 & 0 & 0  \\
\end{matrix} \right)$ and  $t_{12}=\left( \begin{matrix}
   1 & 1 & 0  \\
   0 & 1 & 0  \\
   0 & 0 & 1  \\
\end{matrix} \right).$ 
\end{center} }
The order of the permutation matrix is indicated by the relation $P^5=E$.
The size of the generic set is minimal for non-cyclic group so its minimality does not order a proof.
The order of generated  group is 11232 is in 2 times grater then order of $SL_3(\mathbb{Z})$.

For convenience we fix some notations for \textit{diagonal involutive matrices} from $ESL_3(\mathbb{Z})$:
\begin{center}  
$I_{12} = \begin{pmatrix}
            -1 & 1 & 0 & 0 & 0 \\
            0  & 1 & 0 & 0 & 0 \\
            0  & 0 & 1 & 0 & 0 \\
            0  & 0 & 0 & 1 & 0 \\
            0  & 0 & 0 & 0 & 1
        \end{pmatrix},
        I_{23} = \begin{pmatrix}
            1 & 0 & 0 & 0 & 0 \\
            0 & -1& 1 & 0 & 0 \\
            0 & 0 & 1 & 0 & 0 \\
            0 & 0 & 0 & 1 & 0 \\
            0 & 0 & 0 & 0 & 1
        \end{pmatrix}, \ldots,
        I_{45} = \begin{pmatrix}
            1 & 0 & 0 & 0 & 0 \\
            0 & 1 & 0 & 0 & 0 \\
            0 & 0 & 1 & 0 & 0 \\
            0 & 0 & 0 & -1& 1 \\
            0 & 0 & 0 & 0 & 1
        \end{pmatrix}.
    $
    \end{center}

By the transvection $t_{ij}$ we mean the sum $E+e_{ij}$, where $e_{ij}$ is a matrix unit with 1 only in  intersection of $i$-th row and $j$-th column the rest elements are 0.   For instance     $t_{12}=\left( \begin{matrix}
   1 & 1 & 0  \\
   0 & 1 & 0  \\
   0 & 0 & 1  \\
\end{matrix} \right)$.

Denote a permutation matrix of order 3 by ${{P}_{3}}$ and the transvection \cite{Klyach} by $tr_{12}$ of group $SL_3(\mathbb{Z})$.
Suppose ${{D}_{123}}=\left( \begin{matrix}
   -1 & 0 & 0  \\
   0 & -1 & 0  \\
   0 & 0 & -1  \\
\end{matrix} \right)$ be additional generator extending $SL\left( 3, \mathbb{Z} \right)$ to $ESL\left( 3,\mathbb{Z} \right)$.

Based on the above, we conclude that structure of group generated as extension of $S{{L}_{3}}\left( \mathbb{Z} \right)$ by $\left\langle {{D}_{1}} \right\rangle $ is semidirect product 
$$\left\langle {{D_{1}}} \right\rangle \ltimes SL_{3}\left( \mathbb{Z} \right)\simeq ES{{L}_{3}}\left( \mathbb{Z} \right)$$
with kernel $S{{L}_{3}}\left( \mathbb{Z} \right)$.

This group admits such generating sets $\left<{{D}_{1}},\,\,t{{r}_{12}},\,\,t{{r}_{32}},\,\,{{P}_{3}}\right>$, $\left<{{D}_{2}},\,\,t{{r}_{12}},\,\,t{{r}_{32}},\,\,{{P}_{3}} \right>$ and $\left<{{D}_{3}},\,\,t{{r}_{12}},\,\,t{{r}_{32}},\,\,{{P}_{3}}\right>$.

If we substitute generator $D_{123}$ instead of $D_{1}$ then in terms of these subgroups (factors) a decomposition in product takes form $$\left\langle {{D_{123}}} \right\rangle \times S{{L}_{3}}\left( \mathbb{Z} \right)\simeq ES{{L}_{3}}\left( \mathbb{Z} \right),$$ because $D_{123}$ centralize $S{{L}_{3}}\left( \mathbb{Z} \right)$.

As it is studied by us $ESL_3(\mathbb{Z})$ has a structure of semidirect product $SL_3(\mathbb{Z}) \rtimes \,{\left\langle\mathbb{{D}}_{1} \right\rangle }$.

The existence of a non-trivial homomorphism $\varphi :\,\,{{\mathbb{Z}}_{2}}\to Aut\left( S{{L}_{2}}(\mathbb{Z}) \right)$, as well as $\phi :\,\,{{\mathbb{Z}}_{2}}\to Aut\left( S{{L}_{2}}({{\mathbb{F}}_{p}}) \right)$ can be proved by indicating an element of order 2 in the automorphisms of base group that is the kernel of the semidirect product we want to construct.
There is countergradient automorphism in $S{{L}_{3}}\left( \mathbb{Z} \right)$ that is $\varphi :\,M\to {{\left( {{M}^{T}} \right)}^{-1}}$ or alternating automorphism of order 2 acting by conjugating $\varphi :\,M\to D_1^{-1}MD_1$, where $D_1=\left( \begin{array}{rrr}
   -1\,\,\, &0\,\,\, &0 \\
   0\,\,\, &1 \,\,\, &0  \\
  0\,\, & 0  \,\,\, &1\\
\end{array} \right)$ and is called by diagonal automorphism \cite{Mersl}.

Recall the \textbf{definition} of $\mathbf{TI-subgroup}$ \cite{Suds,Zu}.  Let $G$ be a group and $A < G$, then $A$ is called $\mathbf{TI-}$subgroup iff  $A \cap A^g = e$ for each $g \in G \setminus N_G(A)$.

\begin{rem}
Subgroup $\mathbb{C}_{2}$ is $\mathbf{TI-subgroup}$ and not antinormal subgroup.
\end{rem}

\begin{proof}
In view of ${\mathbb{C}}_{2}$ is one generated then its centralizer coincides with its normalizer. One easy can verify that centralizer consists of all diagonal matrices from $ESL_2(\mathbb{F}_p)$.
Let us find a structure of such normalizer $N_{ESL_2(\mathbb{F}_p)} ({\mathbb{C}}_{2})$.
In view of e.v. is  invariant under conjugation by non-singular matrix over field the normalizer of top subgroup ${\mathbb{C}}_{2}$ in $ESL_2(\mathbb{F}_p)$ consists of all diagonal matrices from $ESL_2(\mathbb{F}_p)$ and permutational matrix ${{\mathcal{P}}}=\left( \begin{array}{rr}
   0\,\,\, &1 \\
  1\,\, & 0 \\
\end{array} \right)$.
We assume that $N_{ESL_2(\mathbb{F}_p)} ({\mathbb{C}}_{2}) \simeq   D(SL_2(\mathbb{F}_p))\rtimes \,{\mathcal{P}}$, where $D(SL_2(\mathbb{F}_p))$ diagonal subgroup of $ESL_2(\mathbb{F}_p)$.

For the rest of elements condition of $A \cap A^g = e$ for each $g \in ESL_2(\mathbb{F}_p) \setminus N_{ESL_2(\mathbb{F}_p)} ({\mathcal{C}}_{2})$ holds. Thus, $\mathbb{C}_{2}$ is $\mathbf{TI-subgroup}$, hence $\mathbb{C}_{2}$ but is antinormal subgroup because of its normalizer $N_{ESL_2(\mathbb{F}_p)} ({\mathcal{C}}_{2})$ do not coincides with $({\mathcal{D}}_{i})$.

\end{proof}
 Assume that $\mathbb{D}_{i} \in \{D_1, D_2, \ldots , D_n \} \setminus D_{12...n}$.
\begin{rem}
If the complement subgroup to kernel $SL_2(\mathbb{F}_p)$ of semidirect product is $\langle \mathbb{D}_{i} \rangle$, then $\mathbb{D}_{i}$ is $\mathbf{TI-subgroup}$ but not antinormal subgroup. The subgroup $D_{12...n}$ is not $\mathbf{TI-subgroup}$. 
\end{rem}

\begin{proof}
In view of ${\mathbb{D}}_{i}$ is one generated then its centralizer coincides with its normalizer. One easy can verify that centralizer consists of all diagonal matrices from $ESL_2(\mathbb{F}_p)$.
Let us find a structure of such normalizer $N_{ESL_2(\mathbb{F}_p)} ({\mathbb{C}}_{2})$.
In view of e.v. is  invariant under conjugation by non-singular matrix over field the normalizer of top subgroup ${\mathbb{D}}_{i}$ in $ESL_3(\mathbb{F}_p)$ consists of  all diagonal matrices from $ESL_2(\mathbb{F}_p)$ and permutational matrix ${{\mathcal{P}}}=\left( \begin{array}{rr}
   0\,\,\, &1 \\
  1\,\, & 0 \\
\end{array} \right)$.
We assume that $N_{ESL_2(\mathbb{F}_p)} ({\mathbb{C}}_{2}) \simeq   D(SL_2(\mathbb{F}_p))\rtimes \,{\mathcal{P}}$, where $D(SL_2(\mathbb{F}_p))$ diagonal subgroup of $ESL_2(\mathbb{F}_p)$.

For the rest of elements condition of $A \cap A^g = e$ for each $g \in ESL_2(\mathbb{F}_p) \setminus N_{ESL_2(\mathbb{F}_p)} ({\mathcal{C}}_{2})$ holds. Thus, $\mathbb{D}_{i}$ is $\mathbf{TI-subgroup}$, but in view of ${{N}_{ESL_3(\mathbb{Z})}}\left( {{D}_{1}} \right) \neq {D}_{1}$ it imply that $\mathbb{D}_{i}$ is not antinormal subgroup 


Thus, $\mathbb{D}_{i}$ is a $\mathbf{TI-subgroup}$, but the inequality ${{N}_{ESL_3(\mathbb{Z})}}\left( {{D}_{i}} \right) \neq {D}_{i}$ immediately entails that $\mathbb{D}_{i}$ is not an antinormal subgroup.

Consider $D_{12...n} \cap D_{12...n}^g$ for each $g \in ESL_n(\mathbb{F}_p) \setminus N_{ESL_n(\mathbb{F}_p)} ({\mathcal{D}}_{12...n})$. Since $N_{ESL_n(\mathbb{F}_p)} ({\mathcal{D}}_{12...n})=ESL_n(\mathbb{F}_p)$, then $ESL_n(\mathbb{F}_p) \setminus N_{ESL_n(\mathbb{F}_p)} ({\mathcal{D}}_{12...n}) = \empty $, thence $D_{12...n} \cap D_{12...n}^g= D_{12...n}$.That completes the proof.
\end{proof}
Additionally we remark that as a consequence $\left\langle {D_{12...n}} \right\rangle$ is normal.

Let us find a normal closure of $D_i$ for $1 \leq i \leq 3$ in $ESL(3,Z)$ which be denoted by ${{N}_{ESL}}\left( {{D}_{1}} \right)$. We demonstrate two typical classes of ${{N}_{ESL_3(\mathbb{Z})}}\left( {{D}_{i}} \right)$ normal closure here: 
\[{{N}_{ESL_3(\mathbb{Z})}}\left( {{D}_{1}} \right)=\left\{ \left( \begin{matrix}
   d & 0 & 0  \\
   0 & a & b  \\
   0 & g & c  \\
\end{matrix} \right) \in ESL_3(\mathbb{Z}) \left| d,a,b,c,g\in Z \right. \right\}\]
\[{{N}_{ESL}}\left( {{D}_{2}} \right)=\left\{ \left( \begin{matrix}
   x & 0 & f  \\
   0 & y & 0  \\
   c & 0 & z  \\
\end{matrix} \right)\in ESL_3(\mathbb{Z}) \left| x,y,f,c,z\in Z \right. \right\}. \]







The intersection $A \cap A^g = E$, provided $g \notin {{N}_{ESL_3(\mathbb{Z})}}\left( {{D}_{i}} \right)$ for each $1 \leq i \leq 3$, is trivial by virtue of ${{N}_{ESL_3(\mathbb{Z})}}\left( {{D}_{i}} \right)={C}_{{ESL}_3(\mathbb{Z})}\left( {{D}_{i}} \right)$ (a normalizer of a one-generated group is equal to its centralizer).

The exceptional case of complement subgroup for the kernel $SL_3(\mathbb{Z})$. The normalizer $N_{ESL_3(\mathbb{Z})}({\langle {D}_{123}}\rangle)$ of the subgroup $\langle D_{123} \rangle$ is an exception as an element of the centre of the entire group $ESL_3(\mathbb{Z})$, therefore ${{N}_{ESL_3(\mathbb{Z})}}\left( \left\langle {{D}_{123}} \right\rangle \right) = ESL_3(\mathbb{Z})$ and consequently $\left\langle {D_{123}} \right\rangle$ is normal. Hence, this subgroup is not antinormal and not TI. 

\begin{definition} \label{quasimple}
A group $G$ is called quasisimple if its inner automorphism group $Inn(G)$ is simple or, equivalently, ${}^G/{}_{Z{(G)}}$ is simple.
\end{definition}

\begin{definition}
      We define $PESL(n,p)$ as a quotient of $ESL(n,p)$ by its center.
\end{definition}

\begin{rem}\label{SLn=2k+1}
In the general case $n=2k+1$, there is an isomorphism $SL(2k+1,\mathbb{Z}) \simeq PSL(2k+1,\mathbb{Z})$ due to the absence of non-trivial scalar matrices $A_i$ with $\det(A_i)=-1$   
\end{rem}
\begin{proof}
    The kernel of homomorphism from $SL(3, \mathbb{Z})$ to $PSL(3,\mathbb{Z})$ consists only of scalar matrix $E$ due to the absence of 
another scalar matrices $A_i$ with $\det(A_i)=1$ over $\mathbb{Z}$.  Therefore $SL(3,\mathbb{Z}) \simeq PSL(3,\mathbb{Z})$. 
\end{proof}

Let $i$ denotes embedding of $SL(3,\mathbb{Z})$ in $ESL(3,\mathbb{Z})$.
 Thus we have quotient 
 ${}^{ESL(3,Z)}/{}_{ <E, -E>} \simeq PESL(3,\mathbb{Z})$. 

\begin{center}
\textbf{Commuting diagram for $n=2k+1$ is similar for the case $n= 3$}
\end{center}
\begin{equation}\label{diag:3,Z}
\xymatrix{
SL(n, \mathbb{Z}) \ar @{^{(}->}@<-1ex>^{\phi} [r] \ar@{.>}[dr]|-{(i)}   \ar @{<->}[d]_{\simeq}
& ESL(n,\mathbb{Z}) \ar @{->>}@<-1ex>[l]_{\mathfrak{C}} \ar @{->>}  [d] ^{\psi}  \\
PSL(n,\mathbb{Z})                        & PESL(n, \mathbb{Z}) \ar @{<<->>}[l]_{=} }
\end{equation}  

\begin{thm}\label{n=2k+1}
For $n=2k+1$ we have $PESL\left( n,p \right)=PSL\left( n,p \right)$ an index of a center 
$\left[ Z\left( ESL\left( 2k+1,p \right) \right): \right. \\ \left.  Z\left( SL\left( 2k+1,p \right) \right) \right]=2$. Furthermore, $PESL\left( n, p\right)$ is simple, except for special cases of $( n, p )$.   
\end{thm}
\begin{proof}
Taking into account that equation ${{a}^{2k+1}}=-1$ has non-trivial solutions in every ${{\text{F}}_{p}}$, $p>1$, then the number of scalar matrices with $\det(A)=-1$ is equal to the number of scalar matrices with $\det(A)=1$. 
Then the number of scalar matrices with $\det(A)=-1$ coincides with the number of scalar matrices with $\det(A)=1$, therefore 
$\left[  ESL\left( 2k+1,p \right): \\  SL\left( 2k+1,p \right) \right]=2$.  This determines the center index $\left[ Z\left( ESL\left( 2k+1,p \right) \right):Z\left( SL\left( 2k+1,p \right) \right) \right]=2$.
Therefore, $PESL\left( n,p \right)= PSL\left( n,p \right)$ in this case.
Thus, according to Definition \ref{quasimple} in the case $PSL\left( n,p \right)$ is simple $ESL\left( n,p \right)$ is quasisimple. 
\end{proof}
Here the equality $PESL(2k+1,\mathbb{Z}) = PSL(2k+1,\mathbb{Z})$ is provided via $\left[ Z\left( ESL\left( 2k+1,p \right) \right):Z\left( SL\left( 2k+1,p \right) \right) \right]=2$ stated in the Theorem \ref{n=2k+1} and the same index of whole group $SL\left( 2k+1,p \right)$ namely $\left[  ESL\left( 2k+1,p \right): \right. \\ \left.  SL\left( 2k+1,p \right)  \right]=2$. The isomorphism $\rho$ is justified in Remark \ref{SLn=2k+1}.

\begin{thm}
 For $n=2k$, $q=p^m$, provided $(-1)^{\left( \frac{q-1}{gcd(q-1,n)} \right)}=1$, we have $\left[ Z\left( ESL\left( n,q \right) \right)\,\,:\,\,Z\left( SL\left( n,q \right) \right) \right]=2$, and $PESL\left( n,q \right)=PSL\left( n,q \right)$. Furthermore $ESL\left( n,q \right)$ is quasimple group in this case.  
\end{thm}

\begin{proof} The center $Z\left( ESL\left( n,q \right) \right)$ consists of scalar matrices so we consider an equation $x^n=\pm 1$.

The proof is based of the Fermat's theorem and the fact that multiplicative group of $\mathbb{F}_{q}$ is cyclic so $g^{{p-1}}=1$, therefore  $g^{\frac{p-1}{2}}=-1$ where $g$ is generator. This entails $(g^{\left( \frac{q-1}{2gcd(q-1,n)} \right)})^n=-1$ which implies that $\frac{q-1}{gcd(q-1,n)}$ is even too $\frac{q-1}{2gcd(q-1,n)} \in \mathbb{N}$.

The necessary of this condition follows from that solution of equation $x^m=-1$ and from the equation obtained by exponenting of this equation to $\frac{q-1}{2gcd(q-1,m)}$ power $(x^m)^{\left( \frac{q-1}{2gcd(q-1,m)} \right)^n}=(x^m)^{t(p-1)}= 1$ by Fermat's theorem. Therefore fraction $\frac{q-1}{gcd(q-1,m)}$ have to be even for existence of solution in $\mathbb{F}_q$.
\end{proof}

\begin{center}
Diagram, for the case $ESL\left( n, q\right)$, $\mathbb{F}_q, q=p^m$,  $(-1)^{\left(\frac{q-1}{\gcd(n, q-1)} \right)}=1$, $n= 2k$
\end{center}
\begin{equation}\label{diag:my_diagram}
\xymatrix{
SL(n, \mathbb{F}_q) \ar @{^{(}->}@<-1ex>^{\phi} [r] \ar@{.>}[dr]|-{(i)}  \ar @{->>} [d] ^{\rho } 
& ESL(n,\mathbb{F}_q)  \ar @{->>}  [d] ^{\psi}  \\
PSL(n,\mathbb{F}_q)                       & PESL(n, \mathbb{F}_q) \ar @{<<->>}[l]_{=} }
\end{equation}

\begin{thm}
 For $n=2k$, $q=p^m$, provided $(-1)^{\left( \frac{q-1}{gcd(q-1,n)} \right)}=-1$, we have
 then  $Z(ESL(2,q)) \simeq Z(SL(2,q))$ for $k \in \mathbb{N}$, and furthermore, $PSL\left( 2, q \right)\triangleleft PESL\left( 2, q \right)$.
 
\end{thm}

\begin{center}
\textbf{For $\left( \frac{-1}{p} \right)=-1$ (The case $n= 2k$)}
\end{center}
\begin{equation}\label{diag:my_diagram}
\xymatrix{
SL(n, \mathbb{F}_p) \ar @{^{(}->}@<-1ex>^{\phi} [r] \ar@{.>}[dr]|-{(i)}  \ar @{->>} [d] ^{\rho }
& ESL(n,\mathbb{F}_p)  \ar @{->>}  [d] ^{\psi}  \\
PSL(n,\mathbb{F}_p)   \ar @{^{(}->}@<-1ex>^{\xi} [r]                 & PESL(n, \mathbb{F}_p)  }
\end{equation}
where epimorphism $\psi$ is homomorphism with the kernel $N = <E, -E>$.
An epimorphism $\rho$ has kernel subgroup of scalar matrices with $det(A)=1$.



\begin{theorem}\label{n=2k+1}
For $n=2k+1$ we have $PESL\left( n,p \right)=PSL\left( n,p \right)$ an index of a center $\left[ Z\left( ESL\left( 2k+1,p \right) \right) : \right. \\ \left. Z\left( SL\left( 2k+1,p \right) \right) \right]=2$. Furthermore, $ESL\left( n, p\right)$ is quasisimple, except for special cases of $( n, p ) \in \{(2,2); (2,3)\}$. 
\end{theorem}

\begin{center}
\textbf{Commuting diagram for $n=2k+1$ }
\end{center}
\begin{equation}\label{diag:3,Z}
\xymatrix{
SL(2k+1, \mathbb{Z}) \ar @{^{(}->}@<-1ex>^{\phi} [r] \ar@{.>}[dr]|-{(i)}   \ar @{<->}[d]_{\simeq}^{\rho}
& ESL(2k+1,\mathbb{Z}) \ar @{->>}@<-1ex>[l]_{\mathfrak{C}} \ar @{->>}  [d] ^{\psi}  \\
PSL(2k+1,\mathbb{Z})                        & PESL(2k+1, \mathbb{Z}) \ar @{<<->>}[l]_{=} }
\end{equation}

\begin{proof}
 An epimorphism $\rho$ has the kernel subgroup of scalar matrices with $det(A)=1$.
In the general case $n=2k+1$, there is an isomorphism $PESL(2k+1, \mathbb{Z}) \simeq PSL(2k+1, \mathbb{Z})$ due to the absence of scalar matrices $A_i \in SL(2k+1, \mathbb{Z})$ with $det(A_i)=-1$, belonging to $Z(SL(2k+1, \mathbb{Z}))$. This entails $| Z( ESL(2k+1,\mathbb{Z})):  Z(SL(2k+1,\mathbb{Z}))|=2$ which producing the mentioned above $PESL(2k+1, \mathbb{Z}) \simeq PSL(2k+1, \mathbb{Z})$.

In accordance with Definition the group \ref{quasimple} $ESL\left( n, p\right)$ is quasisimple, because of its quotient by a center is simple group in this case.  
\end{proof}
 
In the dimension 2, we have a commutative diagram of morphisms

\begin{center}
$\xymatrix{\ar @{} [dr] |{}
SL(2,\mathbb{Z}) \ar @{^{(}->}^{\phi} [r]  \ar @{->>} [d] ^{ } & ESL(2, \mathbb{Z}) \ar @{->>}  [d] ^{\psi} \ar@{.>}[dl]|-{(i)} \\
PSL(2,\mathbb{Z}) \ar @{^{(}->}^{\xi} [r]    & PESL(2,\mathbb{Z}) }$
\end{center}

where epimorphism ${\psi}$ provides us the quotient ${}^{ESL(2,\mathbb{Z})}/{}_{ <E, -E>} \simeq PSL(2,\mathbb{Z})$.


\begin{rem}
 There is not surjective homomorphism from $ESL_2(\mathbb{Z})$ to $SL_2(\mathbb{Z})$.
\end{rem}
\begin{proof}
Taking into account that $ESL_2(\mathbb{Z})$ can be generated by involutions see the section 4 then all its elements under homomorphism $\phi'$ from $ESL_2(\mathbb{Z})$ to $SL_2(\mathbb{Z})$ maps in elements of second order in $SL_2(\mathbb{Z})$, but there are only $E$ and $-E$ of order in $SL_2(\mathbb{Z})$. Thus, there is not surjective homomorphism from $ESL_2(\mathbb{Z})$ to $SL_2(\mathbb{Z})$.
\end{proof}

\begin{prop}
    If $p=4k+1$ then $| \mathbb{Z}(PESL(2,p)):\mathbb{Z}(PSL(2,p))|=2$. 
\end{prop}
\begin{proof}
   Given $p=4k+1$, then $(\frac{-1}{p})=1$ viz. $-1$ is a quadratic residue in the characteristic field $p$, therefore, the elements of subgroup of scalar matrices of $ESL(2k, p)$ takes the form $C= \left( \begin{matrix}
   a & 0  \\
   0 & a  \\
\end{matrix} \right), a \in \mathbb{F}_p$ with $detC= \pm1$. The elements of the centre of $ESL(2k, p)$ have the same form  with the only difference that their $detC= \pm1$. Taking into account the uniform distribution of residues and non-residues in $\mathbb{F}_p$ one immediately deduces that equation $a^2= \pm1$ possess 
solutions that are twice as multiple as $a^2= 1$.
         Consequently $|\mathbb{Z}(PESL(2k,p)):\mathbb{Z}(PSL(2k,p))|=2$, $k \in \mathbb{N}$ and $PESL(2k,p)=PSL(2k,p)$.
         
         Similar corollary is true that is if $p=4k+1$ then $[\mathbb{Z}(PESL(2k,p)):\mathbb{Z}(PSL(2k,p))]=2$, $k \in \mathbb{N}$.
\end{proof}

\begin{prop}
    If $p=4k+3$ then $|\mathbb{Z}(PESL(2k,p)):\mathbb{Z}(PSL(2k,p))|=1$, $k \in \mathbb{N}$, in particular $2|PSL(2,p)|=|ESL(2,p)|$.     
\end{prop}
 \begin{proof}
Since the centre $ESL(2,p)$  consists of scalar matrices, it can only contain matrices of the form 
$\left( \begin{matrix}
   a & 0  \\
   0 & a  \\
\end{matrix} \right)$ but if $-1$ is not a residue in ${{\text{F}}_{p}}$, then there are no solutions in $F_p$ for the equation ${{a}^{2}}\equiv -1(\bmod p)$, so there is only one matrix $E$ in the centre $Z(ESL_2(\mathbb{F}_p))$. The same is true in any even dimension $n=2k$.
\end{proof}

\section{Involutive generating set.}
 
Let ${{i}_{1}}=\left( \begin{matrix}
   -1 & 1  \\
   0 & 1  \\
\end{matrix} \right)$, ${{\rho}}=\left( \begin{matrix}
   0 & 1  \\
   1 & 0  \\
\end{matrix} \right)$ and ${{D}_{1}}=\left( \begin{matrix}
   -1 & 0  \\
   0 & 1  \\
\end{matrix} \right)$ now we show that $\{{{i}_{1}}, \rho , {{D}_{1}}\}$ forms involutive generating set of ${{ESL}_2(\mathbb{Z})}$. 
To obtain the transvection $t_{12}$ we change a sign in the first column of $i_1$ by multiplying ${{i}_{1}}{{D}_{1}}=\left( \begin{matrix}
   1 & 1  \\
   0 & 1  \\
\end{matrix} \right)$. 
Applying conjugation by $\rho$ we express 
$ \rho t_{12} \rho =  \left(
\begin{matrix}
   1 & 0  \\
   1 & 1  \\
\end{matrix} \right) = t_{21}$ that is second transvection from ${SL_2(\mathbb{Z})}$.  

As well known the transvections $t_{12}$ and $t_{21}$ generate group $S{{L}_{2}}\left( \mathbb{Z} \right)$ therefore an arbitrary element $C \in SL_2(\mathbb{Z})$ can be expressed. Consider the matrix equation ${{D}_{1}}X =C$ we see that $X\in ESL_2(\mathbb{Z})$. 
Due to property of uniqueness decision of group equation the bijective correspondence between different matrices $X_i, X_j \in SL_2(\mathbb{Z})$ and solutions of equation $X_1, X_2$ holds. Thus all matrices with determinant equals to -1 are present in the group closure of $\{{{i}_{1}}, \rho , {{D}_{1}}\}$.

Since transvections ${{t}_{21}}=\left( \begin{matrix}
   1 & 1  \\
   0 & 1  \\
\end{matrix} \right)$ and ${{t}_{12}}=\left( \begin{matrix}
   1 & 0  \\
   1 & 1  \\
\end{matrix} \right)$ is generating set for $S{{L}_{2}}\left( \mathbb{Z} \right)$ it remains to form splittable extension of $S{{L}_{2}}\left( \mathbb{Z} \right)$ by top subgroup $D_1$.

\textbf{Corollary}. The involutive generating set with 3 non-commutative involutions for $ES{{L}_{2}}\left( \mathbb{Z} \right)$ is $S=\left\langle {{i}_{1}},\,\,{{\rho}_{}},\,\,{{D}_{1}} \right\rangle $.

\begin{prop}
   The group $ES{{L}_{2}}\left( \mathbb{Z} \right)$ does not possess a generating involutive set of type $(2 \times 2, 2)$.
\end{prop}
The unique involution commuting with rest of involutions of $ES{{L}_{2}}\left( \mathbb{Z} \right)$ is $-E$. So without $-E$ the a triple of generating involutions of the type $(2\times2, 2)$ can not be formed.
Let us analyse acting of diagonal involutions on upper-triangle involution of form ${{i}_{1}}=\left( \begin{matrix}
   -1 & 1  \\
   0 & 1  \\
\end{matrix} \right)$. The similar acting of diagonal involutions will be on ${{i}_{4}}=\left( \begin{matrix}
   1 & 1  \\
   0 & -1  \\
\end{matrix} \right)$ and on lower-triangle ${{l}_{1}}=\left( \begin{matrix}
   -1 & 0  \\
   1 & 1  \\
\end{matrix} \right)$, ${{l}_{4}}=\left( \begin{matrix}
   1 & 0  \\
   1 &-1  \\
\end{matrix} \right)$ and on the same involutions in the form of matrices with a nonzero additional diagonal, which can be expressed as $\rho {{i}_{1}}$ from the involution listed above. Therefore, it is sufficient to study the group closure of involutions ${{i}_{1}}$ and $D_1$ provided $-E$ is present in generic set.

Let us show that it is impossible to express both transvections $t_{12}$, $t_{21}$ by the involutions $i_1$ and $D_1$ as well as it is impossible to generate $-t_{12}$, $-t_{21}$ which are related to $t_{12}=-E(-t_{12})$, $t_{21}=-E(-t_{21})$.

Consider action of $D_1$ from the right changing sign of first row ${{i}_{1}}{{D}_{1}}=\left( \begin{matrix}
   1 & 1  \\
   0 & 1  \\
\end{matrix} \right)$. Acting from the left changes sign of first row ${{D}_{1}} {{i}_{1}} =\left( \begin{matrix}
   1 & -1  \\
   0 & 1  \\
\end{matrix} \right) = t_{12}^{-1}$. But $t_{12}^{}$ is already expressed thus by group axioms $t_{12}^{-1}$ is already expressed too. 
It remains to consider ${{D}_{1}} {{i}_{1}} {{D}_{1}}$  manipulations with it. The result of involution conjugation is again involution ${{D}_{1}} {{i}_{1}} {{D}_{1}} = \left( \begin{matrix}
   -1 & -1  \\
   0 & 1  \\
\end{matrix} \right)$. Successive multiplication of this involution by $D_1$, both from the left and from the right by $i_{1}$, returns us to the matrices obtained above, for instance ${{D}_{1}} {{D}_{1}} {{i}_{1}} {{D}_{1}}= {{i}_{1}} {{D}_{1}} =t_{12}$ because of $D_1$ is involution. 

Alternative set of 3 involutions $\left\langle {{i}_{1}},\,\,{{\rho}_{}},\,\, -E \right\rangle $ does not lead us to expressing both of transvections, in view of one minus in ${i}_{1}$ can not be reduced by $-E$ as well as action of rotation by $\rho$. 
Consequently, a matrix with three elements 1 and the fourth 0 cannot be reached.

At least the conjugation of $t_{12}$ equals $t_{12}^{D_1}$, does not lead us to a new transvection, since ${{D}_{1}} ({{t}_{12}} {{D}_{1}}) = {{D}_{1}} ({{i}_{1}})= {{D}_{1}} {{i}_{1}} = \left( \begin{matrix}
1 & -1 \\
0 & 1 \\
\end{matrix} \right) = t_{12}^{-1}$, which is already obtained above.

By the same reasoning any two transvections of generic set ${{R}_{1}}=\left( \begin{matrix}
   0 & 1  \\
   1 & 0  \\
\end{matrix} \right)$, ${{R}_{2}}=\left( \begin{matrix}
   1 & 0  \\
   1 &-1  \\
\end{matrix} \right)$,
${{R}_{3}}=\left( \begin{matrix}
   -1 & 0  \\
   0 & 1  \\
\end{matrix} \right)$ do not generate whole $ESL(2,Z)$ even these two generators will be completed by $-E$.



{\bf Proposition 2.} {\ 
The \textbf{minimal generating set} of $ESL_3[\mathbb{Z}]$ is $\langle {M}_{6}, t_{12}\rangle$
\begin{center}
  ${{M}_{6}}=\left( \begin{matrix}
   0 & 1 & 0  \\
   0 & 0 & 1  \\
   -1 & 0 & 0  \\
\end{matrix} \right)$ and $t_{12}=\left( \begin{matrix}
   1 & 1 & 0  \\
   0 & 1 & 0  \\
   0 & 0 & 1  \\
\end{matrix} \right),$ 
\end{center}
which possess the relations ${M}_{6}t_{12} {M}^{-1}_{6}=t^{-1}_{31}, \, {M}_{6}t_{31} {M}^{-1}_{6}=t^{-1}_{23} $, ${M}_{6}t_{23} {M}^{-1}_{6}=t^{-1}_{12}$, ${M}_{6}t_{13}{M}^{-1}_{6}=t^{-1}_{32}$, ${M}_{6}t^{-1}_{23} {M}^{-1}_{6}=t^{-1}_{31}$, ${M}^6_{6}=E$, ${M}^3_{6}=-E$, as well as the relations between transvections relations between transvections $[t_{ij}, t_{jk}]=t_{ik}, wherein \, i\neq k, and \,\, [t_{ij}, t_{kl}]=e$ for $i\neq l$ and $k \neq j$ .}

Proof. Due to the relations mentioned above all elementary transvections are presented in explicit form, thence in accordance with \cite{Humph} and Lemma \ref{negativedet} (about an additional matrix with negative determinant) the generating set of $ESL_3[\mathbb{Z}]$ is constructed.

The order of the permutation matrix is indicated by the relation $P^5=E$.
The size of generic set is minimal for non-cyclic group so its minimality does not order a proof.
The size of generated  group is 11232 is in 2 times grater then order of $SL(3,\mathbb{Z})$.

\section{Minimal involutive generating set of type $(2\times2, 2)$ for $ESL_2(Z)$ and involutive sets of generators for $ESL_3(Z)$. }

In 1980, question 7.30 was added to the Kourovka Notebook \cite{Maz}: Which finite simple groups are generated by three involutions, two of which are permutable? Groups with this property were later called (2 × 2, 2)-groups.

Consider the minimal generating set for $ESL_2[{\mathbb{Z}}]$: $S=  \langle t_{12}, \rho  \rangle$, where ${{\rho}}=\left( \begin{matrix}
   0 & 1  \\
   1 & 0  \\
\end{matrix} \right)$ and ${{t}_{12}}=
\left( \begin{matrix} 1 & 1  \\
                      0 & 1  \\
\end{matrix} \right)$ that is transvection. Conjugation of $t_{11}$ lead as to $\rho  t_{12} \rho =t_{21}$. The rest of transvection can be reached by rotation $\rho t_{12}= \left( \begin{matrix} 0 & 1  \\
                      1 & 1  \\
\end{matrix} \right)=t_{22}$.
A very impotent property of involutions is formulated below.
\begin{lem}
    If involutions $A$, $B$ commute then $AB$ is involution too. Otherwise is also true.

Proof.
Let $i_1i_2=i_2i_1$ then $i_1i_2i^{-1}_1=i_2$, $i_1i_2i^{-1}_1i^{-1}_2=e$ and so $i_1i^{-1}_1i_2i_2=i_1i_1i_2i_2=(i_1i_2)^2=e.$

Vice versa if $(i_1i_2)^2=e$ then $(i_1i_2) (i_1i_2)=i_1i_2 i^{-1}_1i^{-1}_2=e$ so $[i_1,i_2 ]=e$.
\begin{exm}
    Let
${{i}_{12}}=\left( \begin{matrix}
   -1 & 1 & 0  \\
   0 & 1 & 0  \\
   0 & 0 & 1  \\
\end{matrix} \right)$
and
${{i'}_{23}}=\left( \begin{matrix}
   -1 & 1 & 0  \\
   0 & 1 & 1  \\
   0 & 0 &-1  \\
\end{matrix} \right)$, 
 then ${{i}_{12}}{i'}_{23}= {{i'}_{23}}{i}_{12} = \left( \begin{matrix}
   1 & 0 & 1  \\
   0 & 1 & 1  \\
   0 & 0 &-1  \\
\end{matrix} \right) = I$ that is involution $I^2=E$.
\end{exm}
\end{lem}

\begin{theorem}\label{minimal3gen} 
A minimal generating set for $ESL_n(\mathbb{F}_2)$, $ESL_n(\mathbb{F}_p)$ with $n > 2$ and $p \in \mathbb{P}$ contains at least 3 generators.

The exceptional case $ESL_2(\mathbb{F}_2)$ is 2-generated group, moreover \\ $ESL_2(\mathbb{F}_2) \simeq D_3 \simeq \langle \rho, t_{12} \rangle$. 
\end{theorem}
\begin{proof}
 Due to the well known isomorphism $SL_2(\mathbb{F}_2) \simeq S_3 \simeq D_3 $ and the fact that $1=-1$ in $\mathbb{F}_2$ we have $SL_2(\mathbb{F}_2)= ESL_2(\mathbb{F}_2)\simeq D_3$. As a direct consequence, two involutions generate $ESL_2(\mathbb{F}_2)$. For instance, $ESL_2(\mathbb{F}_2) \simeq \langle \rho, t_{12} \rangle$, note that $t_{12}$ is the involution in $ESL_2(\mathbb{F}_2)$.

To show that $SL_2(\mathbb{F}_p)$ is not a group generated by two involutions, like the dihedral group we show an absence of isomorphism $ESL_2(\mathbb{F}_p)$ with $D_{2p}$.

In order to show that $ESL_2(\mathbb{F}_p)$ is not two involutions generated group as dihedral group, we show an absence of isomorphism $ESL_2(\mathbb{F}_p)$ with $D_{2p}$. 

The order of $ESL_2(\mathbb{F}_3)$ is 48.
 $ESL_2(\mathbb{F}_3)$ does not contain an element of order 24, hence it cannot be isomorphic to $D_{48}$ having a cyclic group of order 24. Similarly $SL_2(\mathbb{F}_3)$ has no elements with order 12 so $SL_2(\mathbb{F}_3)$ is not isomorphic to $D_{12}$. Arguing in similar way we justify that $ESL_2(\mathbb{F}_p)$ has not two generating set. 

 If $n>3$ then $D_{2p}$ is solvable in contrast with $ESL_2(\mathbb{F}_p)$. That completes the proof.
  \end{proof}

For convenience we fix some notations for \textit{diagonal involutive matrices} from $ESL_3(\mathbb{Z})$:
\begin{center}  
$I_{12} = \begin{pmatrix}
            -1 & 1 & 0 & 0 & 0 \\
            0  & 1 & 0 & 0 & 0 \\
            0  & 0 & 1 & 0 & 0 \\
            0  & 0 & 0 & 1 & 0 \\
            0  & 0 & 0 & 0 & 1
        \end{pmatrix},
        I_{23} = \begin{pmatrix}
            1 & 0 & 0 & 0 & 0 \\
            0 & -1& 1 & 0 & 0 \\
            0 & 0 & 1 & 0 & 0 \\
            0 & 0 & 0 & 1 & 0 \\
            0 & 0 & 0 & 0 & 1
        \end{pmatrix}, \ldots,
        I_{45} = \begin{pmatrix}
            1 & 0 & 0 & 0 & 0 \\
            0 & 1 & 0 & 0 & 0 \\
            0 & 0 & 1 & 0 & 0 \\
            0 & 0 & 0 & -1& 1 \\
            0 & 0 & 0 & 0 & 1
        \end{pmatrix}.
    $
    \end{center}

\textbf{Proposition 2.} {\sl The minimal involutive generating set for both $ESL_5\left[ \mathbb{Z}\right]$ and $ESL_5\left(  \mathbb{F}_p \right)$ consists of 3 involutions. One of such set is the following:}

\begin{center}
    $
        I_{12} = \begin{pmatrix}
            -1 & 1 & 0 & 0 & 0  \\
            0  & 1 & 0 & 0 & 0  \\
            0  & 0 & 1 & 0 & 0  \\
            0  & 0 & 0 & 1 & 0  \\
            0  & 0 & 0 & 0 & 1
        \end{pmatrix},
        D_1 = \begin{pmatrix}
            0  & 0  & 0  & 0  & -1 \\
            0  & 0  & 0  & 1 & 0  \\
            0  & 0  & -1 & 0  & 0  \\
            0  & 1 & 0  & 0  & 0  \\
            -1 & 0  & 0  & 0  & 0
        \end{pmatrix},
        F_L = \begin{pmatrix}
            1 & 0 & 0 & 0 & 0 \\
            0 & 0 & 0 & 0 & 1 \\
            0 & 0 & 0 & 1 & 0 \\
            0 & 0 & 1 & 0 & 0 \\
            0 & 1 & 0 & 0 & 1
        \end{pmatrix}.
    $
\end{center} The key step in the proof is to generate all such involutions using the given involutions, for instance
$F_L I_{12} F_L = I_{15}$. 

Proof. The key step in the proof is to generate all elementary transvections using the given involutions. 
In this set, there exist words for generating both a transvection with length 26 letters) and a permutation matrix $P$ of order 5 (generated by word of 22 letters in the alphabet $I_{12}, D_1, F_L$):
\begin{center}
     \begin{align*}
    &   t_{41} = I_{12} D_1 F_L I_{12} D_1 F_L I_{12} D_1 I_{12} D_1 F_L I_{12} D_1 I_{12} D_1 F_L I_{12} D_1 I_{12} D_1 F_L I_{12} D_1 F_L D_1 I_{12}, \\
    &    P = \begin{pmatrix}
            0 & 0 & 0 & 1 & 0 \\
            0 & 0 & 1 & 0 & 0 \\
            1 & 0 & 0 & 0 & 0 \\
            0 & 0 & 0 & 0 & 1 \\
            0 & 1 & 0 & 0 & 0
        \end{pmatrix} = I_{12} F_L D_1 F_L I_{12} D_1 I_{12} D_1 F_L D_1 F_L I_{12} D_1 F_L D_1 I_{12} F_L D_1 F_L I_{12} F_L D_1.
    \end{align*}
\end{center}
According to \cite{VSEM}, one transvection and a permutation matrix are sufficient to generate $SL(5, \mathbb{Z})$, by virtue of the action of a permutation matrix generates other transvections from a given one. Furthermore, after generating $SL(5, \mathbb{Z})$ one matrix $I_{12}$ ($|I_{12}| = -1$) is sufficient to extend it to the entire $ESL(5, \mathbb{Z})$, because of uniqueness solution of group equation $I_{12}X=Y$, $X \in SL_5[\mathbb{Z}]$, $Y \in ESL_5(\mathbb{Z})$.

\textbf{Theorem 1.} \label{ID0FU} {\sl The minimal involutive generating set of $ESL\left( 5, \mathbb{Z} \right)$ as well as for $ESL\left( 5, \mathbb{F}_p \right)$ consists of 3 involutions $D_0, F_U, I_{12}$ with the relations $(D_0 F_U)^{4} I_{12} (F_U D_0)^{4}=I_{12}$, $(I_{12} D_0)^4 =E$, $(I_{12} F_U)^4 =E$, $(D_0 F_U )^5=E$, $D_0^2 = F_U^2 = I^2_{12}=E$, where }
    \begin{center}
    $ 
        I_{12} = \begin{pmatrix}
            -1 & 1 & 0 & 0 & 0  \\
            0  & 1 & 0 & 0 & 0  \\
            0  & 0 & 1 & 0 & 0  \\
            0  & 0 & 0 & 1 & 0  \\
            0  & 0 & 0 & 0 & 1
        \end{pmatrix},
                D_0 = \begin{pmatrix}
            0  & 0  & 0  & 0  & -1 \\
            0  & 0  & 0  & -1 & 0  \\
            0  & 0  & -1 & 0  & 0  \\
            0  & -1 & 0  & 0  & 0  \\
            -1 & 0  & 0  & 0  & 0
        \end{pmatrix},
        F_U = \begin{pmatrix}
            0 & 0 & 0 & 1 & 0 \\
            0 & 0 & 1 & 0 & 0 \\
            0 & 1 & 0 & 0 & 0 \\
            1 & 0 & 0 & 0 & 0 \\
            0 & 0 & 0 & 0 & 1
        \end{pmatrix}.
              $
             \end{center}
            This involutions possess a relation of diagonal shift of involutive cell $D_0 F_U I_{12} F_U D_0=I_{23}$, which imply the relation $(D_0 F_U)^{4} I_{12} (F_U D_0)^{4}=I_{12}$, $I_{12} D_0 I_{12} D_0 I_{12} D_0 I_{12} D_0 =E$, $(I_{12} F_U)^4 =E$, $(D_0 F_U )^5=E$.
The key step in the proof is to generate a transvection using the given involutions, in order to do this we investigate the relation in this generic set. Then $t_{25}$ is expressed by word of 26 elements:
\begin{equation*}
t_{25} = D_0 I_{12} F_U I_{12} D_0 I_{12} F_U I_{12} D_0 I_{12} D_0 F_U I_{12} D_0 I_{12} D_0 F_U I_{12} D_0 I_{12} D_0 F_U D_0 F_U I_{12} D_0.
\end{equation*}
Constructing this set consisting of $n$ transvections according to Theorem 2.1 from \cite{Humph} means that we have constructed a generating set. Furthermore as it was studied in \cite{VSEM} a minimal generating set from transvections of $SL(n,\mathbb{Z})$ has size $n$. This set of generators allows us to express the permutation matrix $P_5$ in the form
\begin{equation*}
    P_5 = F_U \cdot D_1 \cdot F_U \cdot D_1 =
    \begin{pmatrix}
        0 & 0 & 1 & 0 & 0 \\
        0 & 0 & 0 & 1 & 0 \\
        0 & 0 & 0 & 0 & 1 \\
        1 & 0 & 0 & 0 & 0 \\
        0 & 1 & 0 & 0 & 0
    \end{pmatrix}.
\end{equation*}


\begin{lem}\label{negativedet}
  \textit{Let $A_1, A_2, ..., A_k \in ESL(n, \mathbb Z)$ be a set of matrices, where at least one matrix $A_i$ has a negative determinant. If $SL(n, \mathbb Z) \subseteq G = \left\langle A_1, A_2, ..., A_k \right\rangle$, then $G = ESL(n, \mathbb Z)$.}    
\end{lem}
\textbf{Proof.} Take an arbitrary matrix $B \in SL^{-}(n, \mathbb Z)$, and write it in the form $B = A_i^{-1} A_i B$. The matrix $A_i B$ belongs to $SL(n, \mathbb Z)$, and therefore a word for the matrix $B$ exists in $G$.

\textbf{Theorem 2.} \label{ID1FL} {\sl The minimal involutive generating set of $ESL\left( 5, \mathbb{Z} \right)$ as well as for $ESL\left( 5, \mathbb{F}_p \right)$ could be constructed of the following 3 involutions
\begin{equation*}
    I_{12} = \begin{pmatrix}
        -1 & 1 & 0 & 0 & 0  \\
        0  & 1 & 0 & 0 & 0  \\
        0  & 0 & 1 & 0 & 0  \\
        0  & 0 & 0 & 1 & 0  \\
        0  & 0 & 0 & 0 & 1
    \end{pmatrix},
    D_1 = \begin{pmatrix}
        0  & 0  & 0  & 0  & -1 \\
        0  & 0  & 0  & 1 & 0  \\
        0  & 0  & -1 & 0  & 0  \\
        0  & 1 & 0  & 0  & 0  \\
        -1 & 0  & 0  & 0  & 0
    \end{pmatrix},
    F_L = \begin{pmatrix}
        1 & 0 & 0 & 0 & 0 \\
        0 & 0 & 0 & 0 & 1 \\
        0 & 0 & 0 & 1 & 0 \\
        0 & 0 & 1 & 0 & 0 \\
        0 & 1 & 0 & 0 & 0
    \end{pmatrix}.
\end{equation*}
}
Proof.  In this set, there exist words for generating both a transvection (26 letters) and a permutation matrix of order 5 (22 letters):
\begin{center}
    \begin{align*}
        t_{41} &= I_{12} D_1 F_L I_{12} D_1 F_L I_{12} D_1 I_{12} D_1 F_L I_{12} D_1 I_{12} D_1 F_L I_{12} D_1 I_{12} D_1 F_L I_{12} D_1 F_L D_1 I_{12}, \\
        P &= \begin{pmatrix}
            0 & 0 & 0 & 1 & 0 \\
            0 & 0 & 1 & 0 & 0 \\
            1 & 0 & 0 & 0 & 0 \\
            0 & 0 & 0 & 0 & 1 \\
            0 & 1 & 0 & 0 & 0
        \end{pmatrix} = I_{12} F_L D_1 F_L I_{12} D_1 I_{12} D_1 F_L D_1 F_L I_{12} D_1 F_L D_1 I_{12} F_L D_1 F_L I_{12} F_L D_1.
    \end{align*}
\end{center}
According to \cite{VSEM} one transvection and a permutation matrix  are sufficient to generate $SL(5, \mathbb{Z})$, and since $|D_1| = -1$, the entire $ESL(5, \mathbb{Z})$ is generated.
And since the system of generators contains $D_1$ having $|D_1| = -1$, then the entire $ESL(5, \mathbb{Z})$ is generated by Lemma \ref{negativedet}.

\textbf{Corollary 2.} {\sl
If we replace $D_0$ with $-D_0$, that is, 
we introduce another matrix with a negative determinant,
then the resulting set $\{I_{12}, -D_0, F\}$ also generates the whole $ESL(5, \mathbb Z)$.
}
 
Proof.
Consider this set of matrices $F, -D_0, I_{12}$:

\begin{equation*}
    I_{12} = \begin{pmatrix}
        -1 & 1 & 0 & 0 & 0  \\
        0  & 1 & 0 & 0 & 0  \\
        0  & 0 & 1 & 0 & 0  \\
        0  & 0 & 0 & 1 & 0  \\
        0  & 0 & 0 & 0 & 1
    \end{pmatrix},
    -D_0 = \begin{pmatrix}
        0  & 0  & 0  & 0  & 1 \\
        0  & 0  & 0  & 1 & 0  \\
        0  & 0  & 1 & 0  & 0  \\
        0  & 1 & 0  & 0  & 0  \\
        1 & 0  & 0  & 0  & 0
    \end{pmatrix},
    F_U = \begin{pmatrix}
        0 & 0 & 0 & 1 & 0 \\
        0 & 0 & 1 & 0 & 0 \\
        0 & 1 & 0 & 0 & 0 \\
        1 & 0 & 0 & 0 & 0 \\
        0 & 0 & 0 & 0 & 1
    \end{pmatrix}.
\end{equation*}

Firstly, we write the expression for permutation matrix $P$
\begin{equation*}
    P = F_U \cdot D_0 \cdot F_U \cdot D_0 =
    \begin{pmatrix}
        0 & 0 & 1 & 0 & 0 \\
        0 & 0 & 0 & 1 & 0 \\
        0 & 0 & 0 & 0 & 1 \\
        1 & 0 & 0 & 0 & 0 \\
        0 & 1 & 0 & 0 & 0
    \end{pmatrix}.
\end{equation*}

It turns out that the set of matrices $F_U, -D_0, I_{12}$ can also generate the transvection $T_{41}$, namely by the following word of 24 elements:
\begin{equation*}
    T_{41} = I_{12}F_U I_{12}(-D_0) I_{12}F_U I_{12}(-D_0) I_{12}(-D_0) F_U I_{12}(-D_0) I_{12}(-D_0) F_U I_{12}(-D_0) I_{12}(-D_0) F_U (-D_0) F_U I_{12}.
\end{equation*}
So we may assert that in this no transvection generating set can be obtained by a word of length less than 24. Thus the matrices $F_U, -D_0, I_{12}$ generate $ESL(5, \mathbb Z)$.

\textbf{Remark.} \label{FL}
The transformation to previous generating set can be given by the formula: \begin{equation*}
    F_L = D_0 F_U D_0
\end{equation*}
which proving that set $F_L, D_0, I_{12}$ is generating set too as well as $F_U, D_0, I_{12}$, because we made only equvivalent invertible 
Nielsen transformations of generators.
Thus, instead $F_U$ we can use the matrix $F_L$.

\textbf{Corollary 1.} {\sl
The minimal involutive generating sets for both $ESL_5[\mathbb{Z}]$ and $ESL_5\left( \mathbb{F}_p \right)$ consist of 3 involutions, provided that $n \equiv 1(mod{4})$.}

\textbf{Theorem.} {\sl
A minimal involutive generating sets of $ESL_n(\mathbb{Z}), \, n=5, k\in \mathbb{N}$ as well of $ESL_5\left(  \mathbb{F}_p \right)$ consist of 3 following involutions $ \l D_1, F_U, I_{12} \r$.}

Proof.  As was studied in \cite{VSEM} a minimal generating from transvections set of $ESL(n,\mathbb{Z})$ has size $n$. 
The construction of the transition to the two-element set $\left\langle P, t_{13} \right\rangle $ similar to the minimal generating set from Proposition 1 of generators $P$ and elementary transvection $t_{13}$ is written in the following expressions $t_{13} = I_{12} D_1 F_U I_{12} F_U D_1 I_{12} D_1 F_U I_{12} F_U D_1$, 
\begin{center}
         $P_5 = D_1 F D_1  F = \begin{pmatrix}
        0 & 0 & 0 & 1 & 0 \\
        0 & 0 & 0 & 0 & 1 \\
        1 & 0 & 0 & 0 & 0 \\
        0 & 1 & 0 & 0 & 0 \\
        0 & 0 & 1 & 0 & 0
    \end{pmatrix}$.
\end{center}
This completes the proof.

\textbf{Proposition.} \label{Growth} {\sl
There are no generic set of three involutions for $ESL(n,\mathbb{Z})$, one of which commutes with two others.}

\textbf{Proof}.
A number of matrices with the norm less or equal than $R$ be denoted by $N(R)$.
All words over such generating set $\langle A, B, C \rangle$ that can be constructed over an alphabet from two involutions $A, B$ ($[A, B]$) are represented by one of the next sequences either ${{\text{(}AB\text{)}}^{n}}$ or $B{{(AB)}^{n}}$ or ${{(AB)}^{n}}A$ or $B{{(AB)}^{n}}A$.
It is sufficient to describe the form of the periodic part of these words, viz., ${{(AB)}^{n}}$ then it will be clear that not all the matrices from $ES{{L}_{2}}\left( \mathbb{Z} \right)$ can be represented in this way.

For the proof, a specific form ${{(AB)}^{n}}$ is needed.
An additional part of the proof is based on the fact that the group $ES{{L}_{2}}\left( \mathbb{Z} \right)$ is a two-parameter family. Therefore, it is unlikely to be covered by one-parameter families such as the sequence $AB$.

In view of $AB$ has a determinant equal to $±1$, then it is either similar to $\left( \begin{matrix}
   \pm 1 & 1  \\
   0 & \pm 1  \\
\end{matrix} \right)$  or diagonalizable, and in the last case at least one of the eigenvalues has modulus  $\ge 1$,
however over the ring $\mathbb{Z}$ in a diagonal matrix there only can be elements 1 or -1 in any arrangement.
(If both eigenvalues of $AB$  have modulus 1, and it is diagonalizable but then such a matrix $AB$ will simply have finite order. Such matrix generate cyclic group.)

For a matrix with a Jordan block size of 2 by 2 e.v. are $1$ or $-1$, thence $N(R)=cR$. Since the $n$-th power of such Jordan block is equal to ${AB}^n=\left( \begin{matrix}
1 & n \\
0 & 1 \\
\end{matrix} \right)$ or $\left( \begin{matrix}
-1 & n \\
0 & -1 \\
\end{matrix} \right)$.

Since the $n$-th power of the Jordan block is equal to $(AB)^n$, exactly the first $R$ terms (in a series of power of $(AB)$) are contained in a ball of radius $R$.
This cause that exactly the first $R$ terms are contained in a ball of radius $R$.
This determines the linear growth of the matrix norm $clog _{\alpha} R$.


At the same time, the number of elements $M$ where $M \in ESL_2(\mathbb{Z})$ grows as a function $R \sqrt{R}$, because if we fix first column of $M$, it remains to choose two elements of the second column by $\sqrt{R}$ ways in accordance with the theorem about the prime numbers distribution in order to  Diophantine equation $x \alpha - y \beta =1$ to be solvable over $\mathbb{Z}$.
Note that solvability of $x \alpha - y \beta =1$ is equivalent to $( \alpha, \beta)=1$ and therefore the theorem about prime numbers distribution is applicable to estimating the solutions number.


If the matrix $A \in ESL_2(\mathbb{Z})$ possesses the diagonal form $A=\left( \begin{matrix}
\alpha & 0 \\
0 & \alpha^{-1} \\
\end{matrix} \right)$, where $ \alpha \in \mathbb{C}$ and $| \alpha | >1$, $|\alpha^{-1} | <1$,
this yields exponential growth of a matrix $(AB)^n$ norm,
then in the series in its powers $A^n$ the following growth function of the number of matrices with norm no greater than $R$ holds: $N(R)=clog_{\alpha}R$, where constant $C$ appears due to equivalence transformation.

Therefore, ${{\left( AB \right)}^{n}}$ yields either linear or exponential growth.

The number of $ESL_2(\mathbb{Z})$ elements with a norm that is no grater than $R$ can be estimated from below as $R^2$ due to the relation on the determinant of these elements decrease $N(R)$ from $R^{k^2}$ to $R^{k^2-k}$.
Consequently, the number $N(R)$ of elements having norm no grater than $R$ generated by $\langle A, B, C \rangle$ which forms only four sequences (${{\text{(}AB\text{)}}^{n}}$, $B{{(AB)}^{n}}$, ${{(AB)}^{n}}A$, $B{{(AB)}^{n}}A$) is less than number elements of $ESL_2(\mathbb{Z})$ with a norm less than or equal to $R$, that's accomplish the proof.

{Corollary.}
{\  As a direct corollary we obtain that there is no two involution generating set of $ESL(n,\mathbb{Z})$, $n>2$.}

\section*{Formula for coordinate-wise action by conjugation}

For brevity, let us redesignate $F_U$ as $F$, $D_0$ as $D$.
Let's denote the operation $r_p(x) = -(x \mod {p}) + p$. That is, $r_p(x)$ is the same as taking the excess when dividing by $p$, only the excess is taken in the range from $1$ to $p$ instead of the usual $0$ to $p-1$.
Then the conjugation of the transvection $T_{ij}$ by the matrices $F, D$, and their composition can be written as formulas:

\begin{center}
    $F(i, j) = (r_5(-i), r_5(-j))$ \\
    $D(i, j) = (6 - i, 6 - j)$ \\
    $F \circ D(i, j) = ((r_5(i - 1)), r_5(j - 1))$ \\
    $D \circ F(i, j) = (6 - r_5(-i), 6 - r_5(-j))$
\end{center}

\section*{Commuting involutions of type $2\times2, 2$}
\begin{lemma}\label{minimal3gen} The exceptional case $ESL_2(\mathbb{F}_2)$ is 2-generated group, moreover $ESL_2(\mathbb{F}_2) \simeq D_3 \simeq \langle \rho, t_{12} \rangle$. 

A minimal generating set for $ESL_n(\mathbb{F}_2)$, $ESL_n(\mathbb{F}_p)$ with $n > 2$ contains at least 3 generators.
\end{lemma}
\begin{proof}
 Due to the well known isomorphism $SL_2(\mathbb{F}_2) \simeq S_3 \simeq D_3 $ and the fact that $1=-1$ in $\mathbb{F}_2$ we have $SL_2(\mathbb{F}_2)= ESL_2(\mathbb{F}_2)\simeq D_3$. As a direct consequence, two involutions generate $ESL_2(\mathbb{F}_2)$. For instance, $ESL_2(\mathbb{F}_2) \simeq \langle \rho, t_{12} \rangle$, note that $t_{12}$ is the involution in $ESL_2(\mathbb{F}_2)$.

To show that $SL_2(\mathbb{F}_p)$ is not a group generated by two involutions, like the dihedral group we show an absence of isomorphism $ESL_2(\mathbb{F}_p)$ with $D_{2p}$.

In order to show that $ESL_2(\mathbb{F}_p)$ is not two involutions generated group as dihedral group, we show an absence of isomorphism $ESL_2(\mathbb{F}_p)$ with $D_{2p}$. 

The order of $ESL_2(\mathbb{F}_3)$ is 48.
 $ESL_2(\mathbb{F}_3)$ does not contain an element of order 24, hence it cannot be isomorphic to $D_{48}$ having a cyclic group of order 24. Similarly $SL_2(\mathbb{F}_3)$ has no elements with order 12 so $SL_2(\mathbb{F}_3)$ is not isomorphic to $D_{12}$. Arguing in similar way we justify that $ESL_2(\mathbb{F}_p)$ has not two generating set. 

 If $n>3$ then $D_{2p}$ is solvable in contrast with $ESL_2(\mathbb{F}_p)$. That completes the proof.
  \end{proof}

\begin{theorem} \label{ID0FU} {\sl The minimal involutive generating set of $ESL\left( 5, \mathbb{Z} \right)$ as well as for $ESL\left( 5, \mathbb{F}_p \right)$ consists of 3 involutions $D_0, F_U, I_{12}$ with the relations $(D_0 F_U)^{4} I_{12} (F_U D_0)^{4}I_{12}^{-1}=E$, $(I_{12} D_0)^4 =E$, $(I_{12} F_U)^4 =E$, $(D_0 F_U )^5=E$, $D_0^2 = F_U^2 = I^2_{12}=E$, where }
    \begin{center}
    $
        I_{12} = \begin{pmatrix}
            -1 & 1 & 0 & 0 & 0  \\
            0  & 1 & 0 & 0 & 0  \\
            0  & 0 & 1 & 0 & 0  \\
            0  & 0 & 0 & 1 & 0  \\
            0  & 0 & 0 & 0 & 1
        \end{pmatrix},
                D_0 = \begin{pmatrix}
            0  & 0  & 0  & 0  & -1 \\
            0  & 0  & 0  & -1 & 0  \\
            0  & 0  & -1 & 0  & 0  \\
            0  & -1 & 0  & 0  & 0  \\
            -1 & 0  & 0  & 0  & 0
        \end{pmatrix},
        F_U = \begin{pmatrix}
            0 & 0 & 0 & 1 & 0 \\
            0 & 0 & 1 & 0 & 0 \\
            0 & 1 & 0 & 0 & 0 \\
            1 & 0 & 0 & 0 & 0 \\
            0 & 0 & 0 & 0 & 1
        \end{pmatrix}.
              $
             \end{center}
    
\end{theorem}
\begin{proof}   To justify the above relations, we note that these involutions possess the relation of diagonal shift of the involutive cell $D_0 F_U I_{12} F_U D_0=I_{23}$, which implies the relation $(D_0 F_U)^{4} I_{12} (F_U D_0)^{4}=I_{12}$, $I_{12} D_0 I_{12} D_0 I_{12} D_0 I_{12} D_0 =E$, $(I_{12} F_U)^4 =E$, $(D_0 F_U )^5=E$, also we take into account that conjugation by $D_0$ of $I_{i,i+1}$ change coordinates of unity that is ${(i,i+1)}$ on new coordinates $(d+1 -i, d+1 -i-1)$, where $d$ is dimension of matrix (in this case $d=5$).
The key step in the proof is to generate a transvection using the given involutions, in order to do this we investigate the relation in this generic set. Then $t_{25}$ is expressed by the word of 26 elements:
$
t_{25} = D_0 I_{12} F_U I_{12} D_0 I_{12} F_U I_{12} D_0 I_{12} \\ D_0 F_U I_{12}  D_0 I_{12}  D_0 F_U I_{12} D_0 I_{12} D_0 F_U D_0 F_U I_{12} D_0.
$

Constructing this set consisting of $n$ transvections according to Theorem 2.1 from \cite{Humph} means that we have constructed a generating set. Furthermore, as was studied in \cite{VSEM} a minimal generating set from transvections from $SL(n,\mathbb{Z})$ is of size $n$. This set of generators allows us to express the permutation matrix $P_5$ in the form
\begin{equation*}
    P_5 = F_U \cdot D_1 \cdot F_U \cdot D_1 =
    \begin{pmatrix}
        0 & 0 & 1 & 0 & 0 \\
        0 & 0 & 0 & 1 & 0 \\
        0 & 0 & 0 & 0 & 1 \\
        1 & 0 & 0 & 0 & 0 \\
        0 & 1 & 0 & 0 & 0
    \end{pmatrix}. \end{equation*} 

The minimality of this set is based on Lemma \ref{minimal3gen}.   
\end{proof}

\begin{remark}\label{FL}
The transformation from the previous generating set $\langle F_U, -D_0, I_{12} \rangle$ can be given by the formula: \begin{equation*}
    F_L = D_0 F_U D_0
\end{equation*}
proving that the set $F_L, D_0, I_{12}$ is also the generating set as well as $F_U, D_0, I_{12}$, because we made only equivalent invertible Nielsen transformations of generators.
Thus, instead $F_U$ we can use the matrix $F_L$.
\end{remark}

Consider the following involutions:
\begin{center}
    $
        I_{23} = \begin{pmatrix}
            1 & 0 & 0 & 0 & 0  \\
            0 & -1 & 1 & 0 & 0  \\
            0 & 0  & 1 & 0 & 0  \\
            0 & 0  & 0 & 1 & 0  \\
            0 & 0  & 0 & 0 & -1
        \end{pmatrix},
        I_{43} = \begin{pmatrix}
            1 & 0 & 0 & 0 & 0 \\
            0 & 1 & 0 & 0 & 0 \\
            0 & 0 & 1 & 0 & 0 \\
            0 & 0 & 1 & -1& 0 \\
            0 & 0 & 0 & 0 & 1
        \end{pmatrix}
            $ and $    I_{23} I_{43} =
    \begin{pmatrix}
        1 & 0 & 0 & 0 & 0 \\
        0 & -1 & 1 & 0 & 0 \\
        0 & 0 & 1 & 0 & 0 \\
        0 & 0 & 1 & -1& 0 \\
        0 & 0 & 0 & 0 & 1
    \end{pmatrix} =
    I_{43} I_{23}.
$
\end{center}

\textbf{Theorem.} \label{Maztriple}
    The set of involutions $\left\langle I_{23, 43}, D_0, F_L \right\rangle $ with two commuting involutions $I_{23, 43}, D_0$ is Mazurov triple \cite{Maz}  generates $ESL_3[\mathbb{Z}]$. 

Consider this set of generators $F_L, D_0, I_{23, 43}$, where
\begin{equation*}
    F_L = \begin{pmatrix}
            1 & 0 & 0 & 0 & 0 \\
            0 & 0 & 0 & 0 & 1 \\
            0 & 0 & 0 & 1 & 0 \\
            0 & 0 & 1 & 0 & 0 \\
            0 & 1 & 0 & 0 & 0
        \end{pmatrix}, \quad
    D_0 = \begin{pmatrix}
            0  & 0  & 0  & 0  & -1 \\
            0  & 0  & 0  & -1 & 0  \\
            0  & 0  & -1 & 0  & 0  \\
            0  & -1 & 0  & 0  & 0  \\
            -1 & 0  & 0  & 0  & 0
        \end{pmatrix}. 
    \end{equation*}

   \begin{proof}
       Proof. Since $[I_{23}, I_{43}]=E$, their product $I_{23} I_{43}= I_{23, 43}$ is also an involution. The Nielsen transformation $(D_0 F_U) I_{12} ( D_0 F_U)^{-1} = (D_0 F_U) I_{12} (F_U D_0 )=I_{23}$ lead us to generic set $\left\langle I_{23}, D_0, F_L \right\rangle $. In view of Remark \ref{FL} $F_L = D_0 F_U D_0$ that entails Nielsen transformation $F_U = D_0 F_L D_0$ to generic set $\left\langle I_{23, 43}, F_U, D_0 \right\rangle$. Since $I_{32} = F_U  I_{23} F_U$ as well as $I_{43} = D_0  I_{23} D_0 $ also, $I_{23} D_0 I_{23} D_0= I_{23, 43}$, we make an equivalent transformation of Nielsen to the new generating set $\left\langle I_{23, 43}, D_0, F_L \right\rangle $. Taking into account that  $\left\langle I_{12}, D_0, F_L \right\rangle $ generates $ESL_5[\mathbb{Z}]$, we deduce that $\left\langle I_{23, 43}, D_0, F_L \right\rangle $ also generates it.
   \end{proof} 
    
 \textbf{Remark}  Moreover, we obtain new triple with\textit{\textbf{ two commuting involutions}}
$$ D^2_0=e, I^2_{23, 43}=e,  F_U,   [D_0, I_{23, 43}]=e,  I_{43} = D_0  I_{23} D_0.$$

\begin{proof}
Based on $I_{23} D_0 I_{23} D_0= I_{23, 43}$ one can verify that
$[D^2_0, I_{23, 43}]= D_0 (I_{23} D_0 I_{23} D_0) D_0(I_{23} D_0 I_{23} D_0)^{-1}  = D_0 e D_0 =e$, $D^2_0 =e$, $I_{23, 43}^2=e$.
   \end{proof}

Consider the following involutions:
\begin{center}
    $
        I_{23} = \begin{pmatrix}
            1 & 0 & 0 & 0 & 0  \\
            0 & -1 & 1 & 0 & 0  \\
            0 & 0  & 1 & 0 & 0  \\
            0 & 0  & 0 & 1 & 0  \\
            0 & 0  & 0 & 0 & -1
        \end{pmatrix},
        I_{43} = \begin{pmatrix}
            1 & 0 & 0 & 0 & 0 \\
            0 & 1 & 0 & 0 & 0 \\
            0 & 0 & 1 & 0 & 0 \\
            0 & 0 & 1 & -1& 0 \\
            0 & 0 & 0 & 0 & 1
        \end{pmatrix}
            $ and $    I_{23} I_{43} =
    \begin{pmatrix}
        1 & 0 & 0 & 0 & 0 \\
        0 & -1 & 1 & 0 & 0 \\
        0 & 0 & 1 & 0 & 0 \\
        0 & 0 & 1 & -1& 0 \\
        0 & 0 & 0 & 0 & 1
    \end{pmatrix} =
    I_{43} I_{23}.
$
\end{center}


Since involutions commute, their product $I_{23} I_{43}$ is also an involution. Consider the system of generators $F_L, D, I_{23, 43}$:
\begin{equation*}
    F_L = \begin{pmatrix}
            1 & 0 & 0 & 0 & 0 \\
            0 & 0 & 0 & 0 & 1 \\
            0 & 0 & 0 & 1 & 0 \\
            0 & 0 & 1 & 0 & 0 \\
            0 & 1 & 0 & 0 & 0
        \end{pmatrix}, \quad
    D_0 = \begin{pmatrix}
            0  & 0  & 0  & 0  & -1 \\
            0  & 0  & 0  & -1 & 0  \\
            0  & 0  & -1 & 0  & 0  \\
            0  & -1 & 0  & 0  & 0  \\
            -1 & 0  & 0  & 0  & 0
        \end{pmatrix}, \quad
    I_{23, 43} =
    \begin{pmatrix}
        1 & 0 & 0 & 0 & 0 \\
        0 & -1 & 1 & 0 & 0 \\
        0 & 0 & 1 & 0 & 0 \\
        0 & 0 & 1 & -1& 0 \\
        0 & 0 & 0 & 0 & 1
    \end{pmatrix}.
    \end{equation*}

    Proof. In view of Remark \ref{FL} $F_L = D_0 F_U D_0$, that entails Nielsen transformation $F_U = D_0 F_L D_0$ to generic set $\left\langle I_{23, 43}, F_U, D_0 \right\rangle$. Since $I_{32} = F_U  I_{23} F_U$ as well as $I_{43} = D_0  I_{23} D_0 $ also $I_{23} D_0 I_{23} D_0= I_{23, 43}$, then we make an equivalent transformation of Tits to new generating set $\left\langle I_{23, 43}, D_0, F \right\rangle $. Taking into account that  $\left\langle I_{12}, D_0, F_L \right\rangle $ generates $ESL_3[\mathbb{Z}]$ we deduce that $\left\langle I_{23, 43}, D_0, F_L \right\rangle $ generates it too.
    
    Moreover, we obtain the triple with\textit{\textbf{two commuting involutions}}
$$ D_0, I_{23, 43},  F_U,   [D_0, I_{23, 43}]=e,  I_{43} = D_0  I_{23} D_0.$$

Based on $I_{23} D_0 I_{23} D_0= I_{23, 43}$ one can verify that
$[D_0, I_{23, 43}]= D_0 (I_{23} D_0 I_{23} D_0) (I_{23} D_0 I_{23} D_0) D_0 = D_0 e D_0 =e$.
    
    Note that in the generic set $F_U,  I_{23, 32}, D_0$ another involutions commute again because of $[F_U,  I_{23, 32}]=e$.
    If we consider new involution $I_{12}$ then $D_0 I_{12}D_0 =I_{54}$ and let $I_{12, 54} = I_{12} \times I_{54}$, as a result we have similar commuting pair $[I_{12, 54}, D_0 ]=e$. Another triple $ I_{54}=F_L  I_{23} F_L $ then  commuting pair is $[ F_L,  I_{23, 54} ]=e $.

\bf{Corollary 2.} {\sl The minimal sets of involutive generators $ggi$, both for $ESL_5[\mathbb{Z}]$ and for $ESL_5(\mathbb{F}_p)$, which are $sggi$ \cite{Leem} and also contain the Mazurov triple $(2\times2, 2)$ \cite{Maz}, are as follows: $\langle D_0, F_U, I_{23, 43} \rangle$,
  Proof. The proof is based on directly verification that $[D_{0}, I_{23, 43}]=e$.}
  

\bf{Definition.}  {\sl By the \textit{extended group} of \textit{(upper) unitriangular matrices} $EUT_n({\mathbb{F}})$ (over a field $\mathbb{F}$) we mean the unitriangular group $UT_n(\mathbb{F})$ \cite{SusUTn, Gol} that admits not only 1 but also -1 on the diagonal.}

{\sl 
 Recall that a group is called a 
 \textit{string group generated by involutions} $\{ \rho_0, \rho_1, \ldots, \rho_{n-1} \}$ (sggi) if there exists such ordering of the involutions wherein $\rho_i\rho_j = \rho_j\rho_i$ for every $i, j \in \{0, \ldots , n - 1 \}$ such that $| i - j| > 1$ \cite{Leem}.}

Assume there is indexed tuple of matrices $\{\rho_i\}$ such, that for all $i, j$ if $|i - j| > 1$ then $\rho_i \rho_j = \rho_j \rho_i$ and such a group calls $sggi$ group according to [Leem].

  \bf{Definition 4.}  {\sl By the \textit{extended group} of \textit{(upper) unitriangular matrices} $EUT_n({\mathbb{F}})$ (over a field $\mathbb{F}$) we mean the unitriangular group $UT_n(\mathbb{F})$ \cite{SusUTn, Gol} that admits not only 1 but also -1 on the diagonal. 

{\bf Property.}
{\sl $EUT_3[\mathbb{Z}] \simeq \langle \rho_1, \rho_2, \rho_3, \rho_4 \rangle$ with the following generating set of involutions comply with commuting property [Leem].}
  
  We prove that $EUT_3[\mathbb{Z}] \simeq \langle \rho_1, \rho_2, \rho_3, \rho_4 \rangle $ is $sggi$ group \cite{Leem} with following involutive generating set:}
$$
\rho_0 = \begin{pmatrix} -1 & 0 & 0 \\ 0 & 1 & 0 \\ 0 & 0 & 1 \end{pmatrix}, \quad
\rho_1 = \begin{pmatrix} -1 & 1 & 0 \\ 0 & 1 & 0 \\ 0 & 0 & 1 \end{pmatrix}, \quad
\rho_2 = \begin{pmatrix} 1 & 0 & 0 \\ 0 & 1 & 1 \\ 0 & 0 & -1 \end{pmatrix}, \quad
\rho_3 = \begin{pmatrix} 1 & 0 & 0 \\ 0 & 1 & 0 \\ 0 & 0 & -1 \end{pmatrix},
$$
{\sl where  $[\rho_1, \rho_3] = e$, $[\rho_0, \rho_3] = e$, $[\rho_0, \rho_2] = e$, this defines the type of involution generating set  $(2 \times 2, 2 \times 2)$.}

{\bf Remark}. {\sl If we set $D_0= \rho_0$, $F_L=\rho_1$, $I_{23, 43}=\rho_2$, then this order of involutions justifies that $ESL_5[\mathbb{Z}]$ and $ESL_5(\mathbb{F}_p)$ are string groups generated by involutions (sggi) \cite{Leem}.}

\section{Minimal generating set with a fourth order element.}
{\sl
Let ${{I}_{11}}=\left( \begin{matrix} -1 & 1  \\
  \, 0 &  1  \\
   \end{matrix} \right)$ and $T_4=\left( \begin{matrix} 0 & -1  \\
   1 &  0  \\
   \end{matrix} \right)$. 

   \begin{prop}
   The set $S= \l  T_4, I_{11}, \rho \r $ generates $ESL_2(\mathbb{Z})$.    
   \end{prop}
    We show that $S$ is the generic set.
   Squaring $T_4^2$ we get $-E$. To express a transvection we consider the product 
 $I_{11}T_4 =t_{11}$ and also $T_4 I_{11} = \left( \begin{matrix} 0 & -1  \\
   -1 &   1  \\
   \end{matrix} \right)$ which be denoted by $t$. Applying $-E$ we get $-E t^{-1}_{}= \left( \begin{matrix} 0 & 1  \\
   1 &   -1  \\
   \end{matrix} \right)$ that is inverse to $t_{11}$. In view of group axioms existing of $t^{-1}_{11}$ in the group closure of ${I}_{11}, T_4, -E$  entails existing of $t_{11}$. 
   The rest of transvection can be reached by rotation $\rho t_{12}= \left( \begin{matrix} 0 & 1  \\
                      1 & 1  \\
\end{matrix} \right)$.  Conjugation of $t_{11}$ lead as to $\rho  t_{12} \rho =t_{21}$.
   As well known the transvections $t_{11}$ and $t_{22}$ generate group $S{{L}_{2}}\left( \mathbb{Z} \right)$ and presents of $\rho$ with $det(\rho)=-1$ extends this $S{{L}_{2}}\left( \mathbb{Z} \right)$ to $ES{{L}_{2}}\left( \mathbb{Z} \right)$. }






{\sl
\begin{prop}
 There are no three involutions in $ESL(n,\mathbb{Z})$, one of which commutes with the others, do not generate the group $ESL(n,\mathbb{Z})$.
\end{prop}
A number of matrices with the norm less or equal than $R$ be denoted by $N(R)$.
All words over such generating set $\langle A, B, C \rangle$ that can be constructed over an alphabet from two involutions $A, B$ ($[A, B]$) are represented by one of the next sequences either ${{\text{(}AB\text{)}}^{n}}$ or $B{{(AB)}^{n}}$ or ${{(AB)}^{n}}A$ or $B{{(AB)}^{n}}A$.
It is sufficient to describe the form of the periodic part of these words, viz., ${{(AB)}^{n}}$ – then it will be clear that not all the matrices from $ES{{L}_{2}}\left( \mathbb{Z} \right)$ can be represented in this way.

For the proof, a specific form ${{(AB)}^{n}}$ is needed.
An additional proof is based on the fact that the group $ES{{L}_{2}}\left( \mathbb{Z} \right)$ is a two-parameter family. Therefore, it is unlikely to be covered by one-parameter families such as the sequence $AB$.
If $AB$ has determinant ±1, then it is either similar to $\left( \begin{matrix}
   \pm 1 & 1  \\
   0 & \pm 1  \\
\end{matrix} \right)$  or diagonalizable, and in the first case at least one of the eigenvalues has modulus  $\ge 1$,
however over the ring $\mathbb{Z}$ in a diagonal matrix there only can be elements 1 or -1 in any arrangement. 
For a matrix with a Jordan block size of 2 by 2 e.v. are $1$ or $-1$, thence $N(R)=cR$. Since the $n$-th power of such Jordan block is equal to $B^n=\left( \begin{matrix}
1 & n \\
0 & 1 \\
\end{matrix} \right)$, exactly the first $R$ terms are contained in a ball of radius $R$. R powers 
Since the $n$-th power of the Jordan block is equal to A, exactly the first $R$ terms (in a series of power of $B$) are contained in a ball of radius $R$.
At the same time, the number of elements $M$ where $M \in ESL_2(\mathbb{Z})$ grows as a function $R \sqrt{R}$, because if we fix first column of $M$, it remains to choose two elements of the second column by $\sqrt{R}$ ways in accordance with the theorem about the prime numbers distribution in order to  Diophantine equation $x \alpha - y \beta =1$ to be solvable over $\mathbb{Z}$.
Note that solvability of $x \alpha - y \beta =1$ is equivalent to $( \alpha, \beta)=1$ and therefore the theorem about prime numbers distribution is applicable to estimating the solutions number. 


If the matrix $A \in ESL_2(\mathbb{Z})$ has the diagonal form $A=\left( \begin{matrix}
\alpha & 0 \\
0 & \alpha^{-1} \\
\end{matrix} \right)$, where $ \alpha \in \mathbb{C}$ and $| \alpha | >1$, $|\alpha^{-1} | <1$,
this yields exponential growth of a matrix $(AB)^n$ norm,
then in the series in its powers $A^n$ the following growth function of the number of matrices with norm no greater than $R$ holds: $N(R)=clog_{\alpha}R$, where constant $C$ appears due to equivalence transformation.

Therefore, ${{\left( AB \right)}^{n}}$ yields either linear or exponential growth. (It is also possible that both eigenvalues of $AB$  have modulus 1, and it is diagonalizable—but then it will simply have finite order.)

The number of $ESL_2(\mathbb{F}_p)$ elements with norm no grater than $R$ is $R^2$ because of relation on the determinant of these elements $N(R)$ decrease from $R^{k^2}$ to $R^2$. 
Consequently, the number $N(R)$ for elements generated by $\langle A, B, C \rangle$ which forms only four sequences (${{\text{(}AB\text{)}}^{n}}$, $B{{(AB)}^{n}}$, ${{(AB)}^{n}}A$, $B{{(AB)}^{n}}A$) is less than number elements of $ESL_2(\mathbb{F}_p)$ with norm less than or equal to $R$, that's accomplish the proof. 
}

\subsection{Minimal involutive generating set for $ESL_3(\mathbb F_2)$.}
 {\sl The minimal of transvections generating $SL(n, K)$ is n that was found in \cite{Humph}.
 Involutive generating sets of linear groups over $F_2$ is subject of interest of many authors \cite{Nuz}. 
 Here we find all minimal involutive generating sets, here are three of them: 
\begin{center}
    $
        I_{11} = \begin{pmatrix}
            1 & 0 & 0 \\
            0 & 1 & 1 \\
            0 & 0 & 1
        \end{pmatrix}, \quad
        I_{12} = \begin{pmatrix}
            1 & 0 & 0 \\
            1 & 1 & 0 \\
            1 & 0 & 1
        \end{pmatrix}, \quad
        I_{13} = \begin{pmatrix}
            1 & 1 & 0 \\
            0 & 1 & 0 \\
            0 & 1 & 1
        \end{pmatrix}.
    $ \\[6pt]
    $
        I_{21} = \begin{pmatrix}
            1 & 0 & 0 \\
            1 & 1 & 0 \\
            0 & 0 & 1
        \end{pmatrix} \quad
        I_{22} = \begin{pmatrix}
            1 & 0 & 1 \\
            0 & 1 & 1 \\
            0 & 0 & 1
        \end{pmatrix} \quad
        I_{23} = \begin{pmatrix}
            1 & 1 & 1 \\
            0 & 0 & 1 \\
            0 & 1 & 0
        \end{pmatrix}
    $ \\[6pt]
    $
        I_{31} = \begin{pmatrix}
            1 & 0 & 0 \\
            1 & 1 & 1 \\
            0 & 0 & 1
        \end{pmatrix} \quad
        I_{32} = \begin{pmatrix}
            1 & 0 & 1 \\
            0 & 1 & 1 \\
            0 & 0 & 1
        \end{pmatrix} \quad
        I_{33} = \begin{pmatrix}
            1 & 1 & 1 \\
            0 & 0 & 1 \\
            0 & 1 & 0
        \end{pmatrix}
    $
\end{center}
But over $\mathbb{F}_2$ we have $-1\cong 1 mod 2$ then $ESL(3, \mathbb F_2) = SL(3, \mathbb F_2)$.
    Since $ESL(3,F_2)$ has not 2-generated involutive set, thence these sets are minimal as well.

There 117 involutions in $SL(3, \mathbb F_2)$ and 48672 triples (combinations) of involutions.
Here we find minimal involutive generating set for $ESL(3, \mathbb F_3)$:
\begin{center}
 $        I_{31} = \begin{pmatrix}
            2 & 0 & 0 \\
            0 & 2 & 0 \\
            2 & 2 & 2
        \end{pmatrix}, \quad
        I_{32} = \begin{pmatrix}
            2 & 0 & 2 \\
            0 & 2 & 0 \\
            0 & 0 & 2
        \end{pmatrix}, \quad
        I_{33} = \begin{pmatrix}
            2 & 2 & 2 \\
            0 & 0 & 2 \\
            0 & 2 & 0
        \end{pmatrix}.     $
\end{center}
}

\subsubsection*{Commuting involutions}

\bf{Theorem 2.} {\sl The minimal set of involutive generators $F_L, D_0, I_{23, 43}$ as for $ESL_5[\mathbb{Z}]$ as well as for $ESL_5[\mathbb{F}_p]$ is Mazurov triple \cite{Maz} with a such order:} 
\begin{equation*}
I_{23, 43} =
    \begin{pmatrix}
        1 & 0 & 0 & 0 & 0 \\
        0 & -1 & 1 & 0 & 0 \\
        0 & 0 & 1 & 0 & 0 \\
        0 & 0 & 1 & -1& 0 \\
        0 & 0 & 0 & 0 & 1
    \end{pmatrix},
    \quad
    F_L = \begin{pmatrix}
            1 & 0 & 0 & 0 & 0 \\
            0 & 0 & 0 & 0 & 1 \\
            0 & 0 & 0 & 1 & 0 \\
            0 & 0 & 1 & 0 & 0 \\
            0 & 1 & 0 & 0 & 0
        \end{pmatrix}, \quad
    D_0 = \begin{pmatrix}
            0  & 0  & 0  & 0  & -1 \\
            0  & 0  & 0  & -1 & 0  \\
            0  & 0  & -1 & 0  & 0  \\
            0  & -1 & 0  & 0  & 0  \\
            -1 & 0  & 0  & 0  & 0
        \end{pmatrix}. 
    \end{equation*}
{\sl    Thus, we obtain triple with \textit{\textbf{two commuting involutions}}
$$ D^2_0=e, I^2_{23, 43}=e,  F^2_L=e, \, I_{43} = D_0  I_{23} D_0, \,  [D_0, I_{23, 43}]=e.$$

Consider the product of this involutions.
\begin{center}
    $
        I_{23} = \begin{pmatrix}
            1 & 0 & 0 & 0 & 0  \\
            0 & -1 & 1 & 0 & 0  \\
            0 & 0  & 1 & 0 & 0  \\
            0 & 0  & 0 & 1 & 0  \\
            0 & 0  & 0 & 0 & -1
        \end{pmatrix},
        I_{43} = \begin{pmatrix}
            1 & 0 & 0 & 0 & 0 \\
            0 & 1 & 0 & 0 & 0 \\
            0 & 0 & 1 & 0 & 0 \\
            0 & 0 & 1 & -1& 0 \\
            0 & 0 & 0 & 0 & 1
        \end{pmatrix}
            $ and $    I_{23} I_{43} =
    \begin{pmatrix}
        1 & 0 & 0 & 0 & 0 \\
        0 & -1 & 1 & 0 & 0 \\
        0 & 0 & 1 & 0 & 0 \\
        0 & 0 & 1 & -1& 0 \\
        0 & 0 & 0 & 0 & 1
    \end{pmatrix} =
    I_{43} I_{23}.
$
\end{center}




\section{The criterion of equation $X^2=A$ solvability in $S{{L}_{3}}\left[ \mathbb{Z} \right]$}
Let ${{\lambda }_{1}},\,\,{{\lambda }_{2}},\,{{\lambda }_{3}}$ be  e.v. of $A\in S{{L}_{3}}\left[ \mathbb{Z} \right]$ provided $trA=\lambda _{1}^{{}}+\lambda _{2}^{{}}+\lambda_{3}^{{}}=a$,
$b= {{\lambda }_{1}}{{\lambda }_{2}}+{{\lambda }_{1}}{{\lambda }_{3}}+{{\lambda }_{2}}{{\lambda }_{3}}$. Let ${{\chi }_{A}}\left( x \right)={{x}^{3}}-a{{x}^{2}}+bx-1$ denotes characteristic polynomial for $A$.
According to Lemma 1 \cite{SkuESL} if ${{B}^{2}}=A$, then ${{\mu }_{1}}=\sqrt{{{\lambda }_{1}}},\,\,{{\mu }_{2}}=\sqrt{{{\lambda }_{2}}},\,\,{{\mu }_{3}}=\sqrt{{{\lambda }_{3}}}$, where ${{\mu }_{1}},\,\,{{\mu }_{2}},\,\,{{\mu }_{3}}$  are  e.v. of $B$.
We introduce the following notations $q={{\mu }_{1}}{{\mu }_{2}}+{{\mu }_{1}}{{\mu }_{3}}+{{\mu }_{2}}{{\mu }_{3}}$,  $trB=p={{\mu }_{1}}+{{\mu }_{2}}+{{\mu }_{3}}$. Let ${{\chi }_{B}}\left( x \right)={{x}^{3}}-p{{x}^{2}}+qx-1$ be characteristic polynomial of $B$.

Let $a = trA$ and
$b= \left( \begin{matrix} a_{11} & a_{12}  \\
                      a_{21} & a_{22}  \\
\end{matrix} \right) + \left( \begin{matrix} a_{11} & a_{13}  \\
                      a_{21} & a_{23}  \\
\end{matrix} \right) + \left( \begin{matrix} a_{12} & a_{13}  \\
                      a_{22} & a_{23}  \\
\end{matrix} \right)  = {{\lambda }_{1}}{{\lambda }_{2}}+{{\lambda }_{1}}{{\lambda }_{3}}+{{\lambda }_{2}}{{\lambda }_{3}}$.

{\bf Theorem 1.}
{\ The square root $\sqrt[{}]{A}$ of a diagonizible matrix $A \in S{{L}_{3}}\left[ \mathbb{Z} \right]$ belongs (up to matrix similarity) to $ES{{L}_{3}}\left[ \mathbb{Z} \right]$ iff
\begin{equation}({{p}^{4}}-2a{{p}^{2}}-8p+{{a}^{2}}-4b) ({{p}^{4}}-2a{{p}^{2}}+8p+{{a}^{2}}+4b)=0	
\end{equation}

is solvable over $ \mathbb{Z}$.
Moreover, the equivalent condition is true
\begin{equation*}
\left.\begin{aligned}
{{q}^{2}}-2p &=b\in \mathbb{Z}   \\
{{p}^{2}}-2q &= a\in \mathbb{Z}.
\end{aligned} \right\}
\end{equation*}

If $A$ possess Jordan cell $J_A(\lambda): \ dim(J_A(\lambda)) \geqslant 2$ then 
an equivalent condition is that $\lambda$ be a square in $\mathbb{Z}$.}

If $p=2$ then each matrix  $A \in S{{L}_{3}}\left( \mathbb{F}_p \right)$ has square root $\sqrt[{}]{A}\in S{{L}_{3}}(\mathbb{F}_p)$.

Proof.
\begin{equation*}
{\left. \begin{aligned}
   tr(A)-2\left( {{\mu }_{1}}{{\mu }_{2}}+{{\mu }_{1}}{{\mu }_{3}}+{{\mu }_{2}}{{\mu }_{3}} \right)\in \mathbb{Z} \\ 
  {{\left( {{\mu }_{1}}{{\mu }_{2}}+{{\mu }_{1}}{{\mu }_{3}}+{{\mu }_{2}}{{\mu }_{3}} \right)}^{2}}-2\left( {{\mu }_{1}}+{{\mu }_{2}}+{{\mu }_{3}} \right)\in \mathbb{Z} \\ 
\end{aligned} \right\} },
\end{equation*}
then
\begin{equation*}
\left.\begin{aligned}  & {{\left( {{\mu }_{1}}{{\mu }_{2}}+{{\mu }_{1}}{{\mu }_{3}}+{{\mu }_{2}}{{\mu }_{3}} \right)}^{2}}-2\left( {{\mu }_{1}}+{{\mu }_{2}}+{{\mu }_{3}} \right)=b\in \mathbb{Z} \\ 
 & {{p}^{2}}-2q=a\in \mathbb{Z} \\ 
\end{aligned} \right\}.
\quad \text{}
\end{equation*}

When we expand the brackets, we get the product ${\mu }_{1}{\mu }_{2}{\mu }_{3}$ we take into account the sign of the determinant for the group $SL(3, \mathbb{Z})$ therefore ${\mu }_{1}{\mu }_{2}{\mu }_{3}=1$ but if we consider $ESL(3, \mathbb{Z})$ ${\mu }_{1}{\mu }_{2}{\mu }_{3}=-1$. 

Expressing condition on coefficient of resultant and ${{\chi }_{A}}\left( x \right)$ we avail of them as follows here

\begin{equation*}
\left.\begin{aligned}
{{q}^{2}}-2p &=b\in \mathbb{Z}   \\
{{p}^{2}}-2q &= a\in \mathbb{Z}
\end{aligned} \right\}.
\quad \text{}
\end{equation*}

Let $2q=-b+{{p}^{2}}$, thus we eliminate the variable $q=\frac{{{p}^{2}}-a}{2}$. Its square ${{q}^{2}}=\frac{{{a}^{2}}-2a{{p}^{2}}+{{p}^{4}}}{4}$.

The substitution $p-\frac{a}{2}$ cancels the cubic term in ${{q}^{2}}=\frac{{{a}^{2}}-2a{{p}^{2}}+{{p}^{4}}}{4}$, since ${{\left( p-\frac{a}{2} \right)}^{4}}={{p}^{4}}-4\frac{a}{2}{{p}^{3}}+6\frac{{{a}^{2}}}{4}{{p}^{2}}-4\frac{{{a}^{3}}}{8}p+\frac{{{a}^{4}}}{{{2}^{4}}}$.
This entails 

$${{p}^{4}}-2a{{p}^{2}}-8p+{{a}^{2}}-4b=0.$$ 
\begin{cor}
Let $A \in SL_3(\mathbb{Z})$ provided $\det(A) = 1$. The square root of $A$ belongs to $ESL_3(\mathbb{Z})$ iff there exists an integer $p \in \mathbb{Z}$ such that the following equation holds:
    $$(p^4 - 2ap^2 - 8p + a^2 - 4b)(p^4 - 2ap^2 + 8p + a^2 + 4b) = 0,$$ 
    where $p$ corresponds to the trace of the prospective square root matrix $B$.
   \end{cor}
\begin{proof}
Note that since $A = B^2$, we have $\det(A) = (\det(B))^2$. Therefore, a necessary condition for any solution in $ ESL_3(\mathbb{Z})$ remains $\det(A) = 1$. If we assume $\det(B) = \mu_1 \mu_2 \mu_3 = -1$, the relation for the parameter $a = tr(A)$ remains unchanged: $$a = p^2 - 2q \implies q = \frac{p^2 - a}{2}$$However, the expansion for $b$ (the sum of principal minors of $A$) changes sign in the linear term due to the negative product of the eigenvalues: 
$$b = (\mu_1\mu_2 + \mu_1\mu_3 + \mu_2\mu_3)^2 - 2(\mu_1\mu_2\mu_3)(\mu_1 + \mu_2 + \mu_3)$$ $$b = q^2 - 2(-1)p = q^2 + 2p$$Substituting $q = \frac{p^2 - a}{2}$ into this modified equation yields:$$b = \left( \frac{p^2 - a}{2} \right)^2 + 2p$$$$4b = p^4 - 2ap^2 + a^2 + 8p$$Rearranging the terms, we obtain the characteristic condition for the trace $p$ when $\det(B) = -1$:$$p^4 - 2ap^2 + 8p + a^2 - 4b = 0$$
Combining this with the result from previous Theorem 1, we complete the proof the generalized criterion.   
\end{proof}

{\bf Remark 1.}
{\ 
 The square root of diagonizible matrix $A$ belongs (up to similarity) to $S{{L}_{3}}\left[ \mathbb{F}_p \right]$ iff
\begin{equation}{{p}^{4}}-2a{{p}^{2}}-8p+{{a}^{2}}-4b=0					
\end{equation}
is solvable over $p\in \mathbb{F}_p$.

The equivalent system of conditions
\begin{equation*}
\left.\begin{aligned}
 {{q}^{2}-2p} & = b  \in {\mathbb{F}_p}   \\
 {{p}^{2}-2q} & = a  \in {\mathbb{F}_p}
\end{aligned} \right\}
\end{equation*}
holds.

If $A$ possess Jordan cell $J_A(\lambda): \ dim(J_A(\lambda)) \geqslant 2$ then 
an equivalent condition is that $\lambda$ be a square in $\mathbb{Z}$.

If $p=2$ then each matrix  $A \in S{{L}_{3}}\left( \mathbb{F}_p \right)$ has square root $\sqrt[{}]{A}\in S{{L}_{3}}(\mathbb{F}_p)$.
}

\subsection{The necessary condition and criterion of equation ${{X}^{2}}=A$ solvability in $ S{{L}_{3}}\left[ \mathbb{Z} \right]$}

Let $A\in S{{L}_{3}}\left[ \mathbb{Z} \right]$ with e.v. ${{\lambda }_{1}},\,\,{{\lambda }_{2}},\,{{\lambda }_{3}}$ and assume that exists $B\in S{{L}_{3}}\left[ \mathbb{Z} \right]$ such that ${B}^{2}=A$. 

\begin{thm} \label{criterionSL(Z)}
If $A$ possess diagonal structure then a necessary condition for the solvability of an equation in ${{X}^{2}}=A$ in $S{{L}_{3}}\left[ \mathbb{Z} \right]$ up to a similarity transformation in $S{{L}_{3}}\left[ \mathbb{Z} \right]$.
\begin{equation}\label{trA}              tr(A)+2\left[ {{\mu }_{1}}{{\mu }_{2}}+{{\mu }_{1}}{{\mu }_{3}}+{{\mu }_{2}}{{\mu }_{3}} \right] \,\, \mathbf{is \,\, square \,\, in } \,  \mathbb{Z},
\end{equation}      
where ${{\mu }_{1}},\,\,{{\mu }_{2}},\,\,{{\mu }_{3}}$ are eigenvalues of the matrix $B$. Another words over $\mathbb{Z}$ we can either obtain a direct root of the equation $B^2=A$ or a matrix $B$, which is a square root of $A$ up to similarity. That is, $B^2$ is similar to $A$ over $Q$. 

 \end{thm} 
\begin{proof}
We will prove that if the $A$ is diagonalizable, and solutions (roots) of the equation ${{X}^{2}}=A$ in $S{{L}_{3}}\left[ \mathbb{Z} \right]$ exist then condition \eqref{trA} is satisfied.

Since $trA=tr{{B}^{2}}$ and $tr^{2}{{(B)}}={{\left( {{\mu }_{1}}+{{\mu }_{2}}+{{\mu }_{3}} \right)}^{2}}$, where ${{\mu }_{i}}$ are e.v. of $B$, then the equality $tr(A)=tr{{\left( B \right)}^{2}}-2\left[ {{\mu }_{1}}{{\mu }_{2}}+{{\mu }_{1}}{{\mu }_{3}}+{{\mu }_{2}}{{\mu }_{3}} \right]$ holds.  This equation can be brought into the form 
$$tr(A)+2\left[ {{\mu }_{1}}{{\mu }_{2}}+{{\mu }_{1}}{{\mu }_{3}}+{{\mu }_{2}}{{\mu }_{3}} \right]=tr^{2}{{\left( B \right)}}.$$ 
But $tr^{2}{{\left( B \right)}}={{\left( {{\mu }_{1}}+{{\mu }_{2}}+{{\mu }_{3}} \right)}^{2}}$ is square over $\mathbb{Z}$ therefore $tr(A)+2\left[ {{\mu }_{1}}{{\mu }_{2}}+{{\mu }_{1}}{{\mu }_{3}}+{{\mu }_{2}}{{\mu }_{3}} \right]$ must be square too.
Which accomplish the proof. 
\end{proof}
\begin{cor}
If $A$ possess diagonal structure then a necessary condition for the solvability of an equation in ${{X}^{2}}=A$ in $S{{L}_{3}}\left[ \mathbb{Z} \right]$ up to a similarity transformation in $S{{L}_{3}}\left[ \mathbb{Q} \right]$. 
\end{cor} 
\begin{equation}\label{trA}              tr(A)+2\left[ {{\mu }_{1}}{{\mu }_{2}}+{{\mu }_{1}}{{\mu }_{3}}+{{\mu }_{2}}{{\mu }_{3}} \right] \,\, \mathbf{is \,\, square \,\, in } \,  \mathbb{Z},
\end{equation}      
where ${{\mu }_{1}},\,\,{{\mu }_{2}},\,\,{{\mu }_{3}}$ are eigenvalues of the matrix $B$. Another words over $\mathbb{Z}$ we can either obtain a direct root of the equation $B^2=A$ or a matrix $B$, which is a square root of $A$ up to similarity. That is, $B^2$ is similar to $A$ over $Q$. 


We will prove that if the Jordan form ${{J}_{A}}$ is diagonalizable, then when condition \ref{trA} is satisfied, solutions (roots) of the equation ${{X}^{2}}=A$ in $S{{L}_{3}}\left[ \mathbb{Z} \right]$ exist, indeed one of the roots of $A$ will be a diagonal matrix with property $tr^{2}{{(B)}}={{\left( \sum\limits_{i=1}^{3}{\sqrt[{}]{{{\lambda }_{i}}}} \right)}^{2}}$ possessing the  e. v. $\sqrt[{}]{{{\lambda }_{i}}}$, $i=\overline{1,...,3}$. Its square is a matrix $A'$ similar to the matrix $A$, in other words ${{(B')}^{2}}=A'\sim A$.

Let $B$ be the square root of $A$ reduced to Jordan form. To $A$ be a square of $B = \left( \begin{matrix}
   \sqrt{\lambda}, & x, &0  \\
    0,  &\sqrt{\lambda}, &0 \\
    0,  &0 , &1
\end{matrix} \right)$ the equation $2\sqrt{\lambda }\,\cdot x=1$ has solution in $\mathbb{Z}$, that is possible iff $\sqrt{\lambda }\in \mathbb{Z}$, (but it is impossible over $\mathbb{Z}$, therefore there are not roots in this case). It remains to exclude existence of similar matrix to $A$ that is solution of equation. For this goal we reduce $A$ to canonical jordan form. Let ${{J}_{A}}=\left( \begin{matrix}
   1 & 1  \\
   0 & 1  \\
\end{matrix} \right)$ in order to matrix be similar to ${{J}_{A}}$ over $\mathbb{Z}$ we need to find non-degenerated matrix $U$ such that ${{J}_{A}}=UA'{{U}^{-1}}$. It is necessary to $A'$ has the same e.v. $\lambda =1$ (or  ). Applying the equivalent transformation to both part of ${{J}_{A}}=UA'{{U}^{-1}}$
Applying an equivalent transformation to both part of ${{J}_{A}}=UA'{{U}^{-1}}$ that preserves the similarity relation and consists of subtracting the diagonal matrix $\lambda E$ from both matrices, we obtain  ${{J}_{A}}-\lambda E=U(A'-\lambda E){{U}^{-1}}$. Since $\lambda =1$ it takes form ${{J}_{A}}-E=U(A'-E){{U}^{-1}}=UA'{{U}^{-1}}-UE{{U}^{-1}}$. Note that last matrix $UA'{{U}^{-1}}-UE{{U}^{-1}}=\left( \begin{matrix}
   0 & {{a}_{12}}  \\
   0 & 0  \\
\end{matrix} \right)$ is degenerates to one non zero number in ${{a}_{12}}$. In the left part of  this equation is the same situation ${{J}_{A}}-E=\left( \begin{matrix}
   0 & 1  \\
   0 & 0  \\
\end{matrix} \right)$. Therefore element ${{a}_{12}}\in \mathbb{Z}$ have to be associated elements with 1. 

But in $\mathbb{Z}$ such elements only $-1$ and 1. 
But for these elements there not similar matrix $A'$ over $\mathbb{Z}$. Thus there are not similar $A'$ to ${{J}_{A}}$ in $S{{L}_{2}}\left( \mathbb{Z}  \right)$ in contrast to the existence of a similarity transformation over the field $Q$.

Furthermore over ${{\text{F}}_{p}}$ a value of the symmetric polynomial ${{\mu }_{1}}{{\mu }_{2}}+{{\mu }_{1}}{{\mu }_{3}}+{{\mu }_{2}}{{\mu }_{3}}\in {{\text{F}}_{p}}$. Also due to the Lemma 1 we can obtain ${{\mu }_{i}}=\sqrt[{}]{{{\lambda }_{i}}}$. The symmetric polynomial ${{\mu }_{1}}{{\mu }_{2}}+{{\mu }_{1}}{{\mu }_{3}}+{{\mu }_{2}}{{\mu }_{3}}\in {{\text{F}}_{p}}$ be denoted by $P$.

Moreover since $tr{{\left( B \right)}^{2}}-P=\frac{tr{{\left( B \right)}^{2}}+tr\left( A \right)}{2}$ then $P=tr{{\left( B \right)}^{2}}-\frac{tr{{\left( B \right)}^{2}}+tr\left( A \right)}{2}=\frac{tr{{\left( B \right)}^{2}}-tr\left( A \right)}{2}$.
And finally, the last value on the right side, we can express as follows $tr^{2}{(B)}={{\left( \sum\limits_{i=1}^{3}{\sqrt[{}]{{{\lambda }_{i}}}} \right)}^{2}}$.

\begin{rem}
If characteristic polynomial of $A$ decompose in linear factors and $A$ has Jordan block of size 2, provided $\sqrt{\lambda} \in \mathbb{Q}$ then the equation $X^2 = A$ has solution in $SL(2, \mathbb{Q})$.    
\end{rem}  

In view of $A$ decomposes in linear factors over $\mathbb{Q}$ therefore $A$ possess Jordan block of size 2.
Let $B=\sqrt{A}$. Then it is not diagonalizable, so $B$ is reducible to Jordan form with Jordan block of size 2. Let us prove that this is as necessary condition and that $\sqrt {\lambda} \in \mathbb{Q}$ as a sufficient one, $B$ can be transformed to form
$B' = \left( \begin{matrix}
   \sqrt{\lambda}, &  1   \\
    0,  &\sqrt{\lambda}  \\
      \end{matrix} \right).$
Then $ (B')^2=A' =\left(\begin{matrix}
   \lambda & 2 \sqrt{\lambda}  \\
   0 & \lambda  \\
\end{matrix} \right)$. In order to prove that $A'$ be similar to ${{J}_{A}}$ over $\mathbb{Q}$ we need to find non-degenerated matrix $U$ such that ${{J}_{A}}=UA'{{U}^{-1}}$. It necessary entails $B^2=A'$. It is necessary to $A'$ has the same e.v. $\lambda$. 
Applying an equivalent transformation to both part of ${{J}_{A}}=UA'{{U}^{-1}}$ (that preserves the similarity relation) consists of subtracting the diagonal matrix $\lambda E$ from both matrices, we obtain  ${{J}_{A}}-\lambda E=U(A'-\lambda E){{U}^{-1}}$. Hence it takes form ${{J}_{A}}-E=U(A'-\lambda E){{U}^{-1}}=UA'{{U}^{-1}}-U\lambda E{{U}^{-1}}$. Note that the last matrix $UA'{{U}^{-1}}-U\lambda E{{U}^{-1}}=\left( \begin{matrix}
   0 & {{a}_{12}}  \\
   0 & 0  \\
\end{matrix} \right)$ degenerates to one non-zero number ${{a}_{12}}=2\sqrt{\lambda}$. In the left part of the equation above is the same situation ${{J}_{A}}-\lambda E=\left( \begin{matrix}
   0 & 1  \\
   0 & 0  \\
\end{matrix} \right)$. Therefore element ${{a}_{12}}\in \mathbb{Q}$ have to be associated elements with 1, which is possible iff  $\sqrt{\lambda} \in \mathbb{Q}$.

\begin{rem}
    In case if ${{J}_{A}}$ possess jordan cell of size 2 with e.v. $\lambda $ then $\sqrt{A}\in S{{L}_{3}}\left[ \mathbb{Q} \right]$ iff is true and $\lambda$ is square in $\mathbb{Q}$.     
\end{rem}
Proof. Let $B$ be the square root of $A$ reduced to Jordan form. To $A$ be a square of $B = \left( \begin{matrix}
   \sqrt{\lambda}, & x, &0  \\
    0,  &\sqrt{\lambda}, &0 \\
    0,  &0 , &1
\end{matrix} \right)$ the equation $2\sqrt{\lambda }\,\cdot x=1$ has solution in $\mathbb{Q}$, that is possible iff $\sqrt{\lambda }\in \mathbb{Q}$, (but it is impossible over $\mathbb{Z}$, therefore there are not roots in this case). It remains to exclude existence of similar matrix to $A$ that is solution of equation. For this goal we reduce $A$ to canonical jordan form. Let ${{J}_{A}}=\left( \begin{matrix}
   1 & 1  \\
   0 & 1  \\
\end{matrix} \right)$  to matrix be similar to ${{J}_{A}}$ over $\mathbb{Z}$ we need to find non-degenerated matrix $U$ such that ${{J}_{A}}=UA'{{U}^{-1}}$. It is necessary to$A'$ has the same e.v. $\lambda =1$ (or  ). Applying the equivalent transformation to both part of ${{J}_{A}}=UA'{{U}^{-1}}$
Applying an equivalent transformation to both part of ${{J}_{A}}=UA'{{U}^{-1}}$ that preserves the similarity relation and consists of subtracting the diagonal matrix $\lambda E$ from both matrices, we obtain  ${{J}_{A}}-\lambda E=U(A'-\lambda E){{U}^{-1}}$. Since $\lambda =1$ it takes form ${{J}_{A}}-E=U(A'-E){{U}^{-1}}=UA'{{U}^{-1}}-UE{{U}^{-1}}$. Note that last matrix $UA'{{U}^{-1}}-UE{{U}^{-1}}=\left( \begin{matrix}
   0 & {{a}_{12}}  \\
   0 & 0  \\
\end{matrix} \right)$ is degenerates to one non zero number in ${{a}_{12}}$. In the left part of  this equation is the same situation ${{J}_{A}}-E=\left( \begin{matrix}
   0 & 1  \\
   0 & 0  \\
\end{matrix} \right)$. Therefore element ${{a}_{12}}\in \mathbb{Q}$ have to be associated elements with 1. 

\begin{rem}
    If $A \in SL_{2} \mathbb{(Q)}$ or $A \in SL_{2} \mathbb{(F}_p)$ possess a jordan block ${{J}_{A}}$ of size 2 with e.v. $\lambda $ then $\sqrt{A}\in S{{L}_{2}}\left[ {{\text{Q}}} \right]$ ($\sqrt{A}\in S{{L}_{2}}\left[ {{\text{F}}_{p}} \right]$) iff $\sqrt{\lambda }\in {{\text{Q}}}$ or $\sqrt{\lambda }\in {{\text{F}}_{p}}$ respectively.
\end{rem}
\textbf{Proof}.
In view of $A$ decomposes in linear factors over $\mathbb{Q}$ therefore $A$ possess Jordan block of size 2.

Let $B=\sqrt{A}$ then it is not diagonalizable so $B$ is reducible to Jordan form with Jordan block of size 2. Considering this as a necessary condition and that $\sqrt {\lambda} \in \mathbb{Q}$ as a sufficient one, $B$ can be transformed to form
$B' = \left( \begin{matrix}
   \sqrt{\lambda}, &  1   \\
    0,  &\sqrt{\lambda}  \\
      \end{matrix} \right)$
then $ (B')^2=A' =\left(\begin{matrix}
   \lambda & 2 \sqrt{\lambda}  \\
   0 & \lambda  \\
\end{matrix} \right)$.

Actually to find solution $B$ in Jordan form with Jordan cell of size 2 such that $J^2_B= \left( \begin{matrix}
   {\lambda}, & 2\sqrt{\lambda},   \\
    0,  & {\lambda} \\
        \end{matrix} \right)$ the equation $2\sqrt{\lambda }\,\cdot x=1$
        or similarity transformation has to exists over $\mathbb{Q}$ i. e. non-generated matrix $U \in S{{L}_{2}}\left[ {{\text{Q}}} \right]$ such that $U J_B U^{-1}=J_A$.

Since $J_B^2=\left( \begin{matrix}
   {\lambda}, & 2\sqrt{\lambda},   \\
    0,  & {\lambda} \\
        \end{matrix} \right)$ then conjugation by $U= \left( \begin{matrix}
   \frac{1}{k} , & 0,   \\
    0,  & k \\
        \end{matrix} \right)$ can transforms it to form $A'= \left( \begin{matrix}
   {\lambda} , & \frac{2\sqrt{\lambda}}{k},   \\
    0,  & k \\
        \end{matrix} \right)$ which in case of $\sqrt{\lambda} \in \mathbb{Q}$ lead us to solution $J_A$ in jordan form in $SL_{2} \mathbb{(Q)}$.

In order to show absence of solutions if $\sqrt{\lambda} \notin \mathbb{Q}$ we apply equivalent transformation 
transformation to both part of ${{J}_{A}}=UA'{{U}^{-1}}$ that preserves the similarity relation and consists of subtracting the diagonal matrix $\lambda E$ from both matrices, we obtain  ${{J}_{A}}-\lambda E=U(A'-\lambda E){{U}^{-1}}$. Since $\lambda =1$ it takes form ${{J}_{A}}-E=U(A'-E){{U}^{-1}}=UA'{{U}^{-1}}-UE{{U}^{-1}}$. Note that last matrix $UA'{{U}^{-1}}-UE{{U}^{-1}}=\left( \begin{matrix}
   0 & {{a}_{12}}  \\
   0 & 0  \\
\end{matrix} \right)$ is degenerates to one non zero number in ${{a}_{12}}$. In the left part of  this equation is the same situation ${{J}_{A}}-E=\left( \begin{matrix}
   0 & 1  \\
   0 & 0  \\
\end{matrix} \right)$.

Note that the last matrix $UA'{{U}^{-1}}-U\lambda E{{U}^{-1}}=\left( \begin{matrix}
   0 & {{a}_{12}}  \\
   0 & 0  \\
\end{matrix} \right)$ degenerates to one non-zero number ${{a}_{12}}=2\sqrt{\lambda}$. In the left part of the equation above is the same situation ${{J}_{A}}-\lambda E=\left( \begin{matrix}
   0 & 1  \\
   0 & 0  \\
\end{matrix} \right)$. 
Therefore element ${{a}_{12}}\in \mathbb{Q}$ have to be associated elements with 1, which is possible iff $\sqrt{\lambda} \in \mathbb{Q}$.

        the equation $2\sqrt{\lambda }\,\cdot x=1$ have to be solvable in ${{\text{F}}_{p}}$ or equivalently $\sqrt{\lambda }\in {{\text{F}}_{p}}$, by the statements about similarity transformation over a field always exists a matrix $U$ such that $U A U^{-1}= J_A$, so if it occurs that $B^2=A =  \left( \begin{matrix}
   {\lambda}, & 2k, &0  \\
    0,  & {\lambda}, &0 \\
    0,  &0 , &1  
    \end{matrix} \right)$ 
(but it is impossible over $\mathbb{Z}$, therefore there are not roots in that case).

\section{Matrix roots of higher powers}

\textbf{Proposition}. Let $A, B \in G{{L}_{2}}({{\mathbb{F}}_{p}})$ if $ B$ is root of equation ${{X}^{3}}=A$, then

$$B=\frac{A+\tr(\sqrt[3]{A})\sqrt[3]{\det (A)}} {\left( \tr{ \sqrt[3]{A} }\right)^{2}-\sqrt[3]{\det (A)}},$$
 in the case $A\in S{{L}_{2}}({{\mathbb{F}}_{p}})$ we specify the values of the formula parameters taking into account that $\det (A)=1$.



\begin{proof}
 If $\sqrt[3]{A}\in S{{L}_{2}}({{\mathbb{F}}_{p}})$ then we consider Cayley-Hamilton equation (C.H.E.) ${{A}^{3}}-\tr\left( A \right){{A}^{2}}+\left( {{\lambda }_{1}}{{\lambda }_{2}}+{{\lambda }_{1}}{{\lambda }_{3}}+{{\lambda }_{2}}{{\lambda }_{3}} \right)A-\det \left( A \right)=0$.
Note, that $\tr{{\left( A \right)}^{2}}={{\left( {{\lambda }_{1}}+{{\lambda }_{2}}+{{\lambda }_{3}} \right)}^{2}}=\lambda _{1}^{2}+\lambda _{2}^{2}+\lambda _{3}^{2}-\left( {{\lambda }_{1}}{{\lambda }_{2}}+{{\lambda }_{1}}{{\lambda }_{3}}+{{\lambda }_{2}}{{\lambda }_{3}} \right)$.

Consider  C.H.E.  for  $A:\, dimA=2$,  ${{A}^{2}}-\tr\left( A \right)\cdot A+\det \left( A \right)\cdot 1=0$.
Multiplying last equation on $A$ admit us obtain the chain of transformations:
\begin{align} \label{KHE}
   {{A}^{3}} & =\left( \tr \left( A \right)A-\det \left( A \right) \right)A=\tr\left( A \right){{A}^{2}}- \det \left( A \right)A= \nonumber\\
   & =\tr\left( A \right)\left( \tr\left( A \right)A-\det \left( A \right) \right)-\det \left( A \right)A=  \nonumber\\[-3mm]
   & ~ \\[-3mm]
  & =\tr{{\left( A \right)}^{2}}A-\tr\left( A \right)\det \left( A \right)-\det \left( A \right)A= \nonumber\\
 & =\left( \tr{{\left( A \right)}^{2}}-\det \left( A \right) \right)A-\tr\left( A \right)\det \left( A \right). \nonumber
\end{align}

By applying substitute matrix $\sqrt[3]{A}$ instead of $A$ we express

\begin{equation}\label{2}
\sqrt[3]{A}=\frac{A + \tr\left(     \sqrt[3]{A} \right)\sqrt[3]{\det A}}{\tr^{2}{{\left( \sqrt[3]{A} \right)}}-\sqrt[3]{\det \left( A \right)}}.					 
\end{equation}

Thus, $\sqrt[3]{A}=\frac{A\,+\tr\left( \sqrt[3]{A} \right)\sqrt[3]{\det \left( A \right)}}{\,\left( t{{r}^{2}}\left( \sqrt[3]{A} \right)-\sqrt[3]{\det \left( A \right)} \right)}$.

Note that $\det \left( \sqrt[3]{A} \right)=\sqrt[3]{\det \left( A \right)}$ because of a determinant is homomorphism.

But $\tr\left( \sqrt[3]{A} \right)$ is still not computed.
 From \eqref{KHE}
 we conclude \\ ${A}^{3}=\left( \tr{{\left( A \right)}^{2}}-\det \left( A \right) \right)A-\tr\left( A \right)\det \left( A \right).$ Computing a trace from both sides we obtain
$\tr\left( {{A}^{3}} \right)=\tr{{\left( A \right)}^{3}}-3\det \left( A \right)\tr\left( A \right)$.

Substituting $\sqrt[3]{A}$ instead of $A$, we get
$\tr\left( A \right)=\tr{{\left( \sqrt[3]{A} \right)}^{3}}-3\sqrt[3]{\det A}\tr\left( \sqrt[3]{A} \right).$

Therefore we need to solve
$\tr\left( A \right)=\tr{{\left( \sqrt[3]{A} \right)}^{3}}-3\sqrt[3]{\det A}\tr\left( \sqrt[3]{A} \right)$.

We denote $\sqrt[3]{A}$ by $X$ and obtain the equation

$${{X}^{3}}-3\sqrt[3]{\det \left( A \right)}X-\tr\left( A \right)=0.$$

The \emph{solvability} of this equation in $SL_2({\F}_p)$
is equivalent to the \emph{existence} of a trace $\tr \sqrt[3]{A}$ in the base field ${\F}_p$. Recall that for a convenience we denote $\sqrt[3]{A}$ by $B$.
Therefore, we find $\tr B$ as a sum of cubic roots from solutions $\lambda_{1}^3$, $\lambda_{2}^3$ of the equation ${{x}^{2}}-(trA)x+\det A=0$, 
taking into account that $\tr B=\tr\sqrt[3]{A}={{\lambda }_{1}}+{{\lambda }_{2}}$ then $\lambda _{1}^{3}+\lambda _{2}^{3}=trA$.
 Indeed, each root ${{\lambda^3}_{i}}$ satisfies C.H.E. $${{x}^{2}}-(\tr  A)x+\det A=0$$ 
whose roots: $\lambda _{1,2}^{3}=\frac{trA\pm \sqrt{t{r^{2}}A-4\det A}}{2}$, therefore $$trB=\sqrt[3]{\frac{trA+\sqrt{t{{r}^{2}}A-4\det A}}{2}}\cdot \varepsilon +\sqrt[3]{\frac{trA-\sqrt{t{{r}^{2}}A-4\det A}}{2}}\cdot \bar{\varepsilon },$$ wherein $\varepsilon ={{e}^{\frac{2\pi i\cdot k}{3}}},\,\,k\in \mathbb{Z}$ i.e. $ \varepsilon =\cos \frac{2\pi k}{3}+i\sin \frac{2\pi k}{3},\,\,\,k\in \mathbb{Z}.$

Now we consider two singular cases:
\begin{itemize}
\item $(\ tr{B})^2 - \det{B}=0$, where $B= \sqrt[3]{A}$. \\
In this case from \eqref{2} we obtain $$A=B^3 =  - \tr{B} \det{B} \cdot E = -(\tr{B})^3 \cdot E.$$
From that we can compute $\tr{B}$ as a root of the equation $x^3+\dfrac{\tr{A}}{2}=0$.

\item If we consider additional matrix $2\times 2$, $B \notin SL_2({\F})$ and $B^3=0$, then it's minimal canceling polynomial is $X^2$ or $X$. By Celly Hamilton equation (C.H.E) $B^2 - tr{B} \cdot B + \det{B} \cdot I = 0$, which leads us to $tr{B}=0, \det{B}=0$.
\end{itemize}
\end{proof}

Let us define sequences $s_n = \tr{B} ~s_{n-1} + t_{n-1}$ and
 $t_n = - \det{B}~ s_{n-1}$ with initial conditions $s_1 = 1, t_1 = 0$, $s_2 = tr B$ and $t_2 = - det B$.
Now we prove the following Lemma.

{\bf Lemma.}
Sequences $s_n$, $t_n$ satisfy recurrent equation with characteristic polynomial $c(x)$ which is also characteristic polynomial for matrix $B$.

{\bf Theorem 2.}
 Let $n \geqslant 3$ and $A\in M_2(\mathbb{F}_p)$. If $A \not = c \cdot E$ for any $c \in \mathbb{F}_p$ and $R = \{B \in M_2(\mathbb{F}_p)  \mid B^n = A\}$  set of it's  $n$-th roots, then next inclusion follows:

\begin{gather*}
   R \subset \left\{B \in M_2(\mathbb{F}_p)  \left|   B=\dfrac{A+b~ Q_{n-2}(a,b) \cdot E}{Q_{n-1}(a,b)},~  b^n =\det{A} \right.,  ~P_n(a,b) = \tr{A} \right\}. \end{gather*}


\subsection{Recursive formula of $n$-th power root in the matrix ring $M_2(\mathbb{F}_p)$ }
Here and below we denote identity matrix from the matrix ring $M_2(\mathbb{F}_p)$ by $I$, in contrast to the identity matrix from the group $SL(2,\mathbb{Z})$ denoted as $E$.

\begin{prop} Let $A\in M_2(\mathbb{F}_p)$. Then it's cube roots $R = \{B \in M_2(\mathbb{F}_p)  \mid B^3 = A\}$ can be obtained as follows:
\begin{enumerate}
\item If $A=0$, then $R=\{B \in M_2(\mathbb{F}_p) \mid \det{B}=0,~\tr{B}=0 \}$;
\item If $A = c^3 I$, where $c \in \mathbb{F}_p /  \{0\} $, then $R=\{c \cdot B \in M_2(\mathbb{F}_p) \mid B^3 = I\}$;
\item In other cases $R \subset \left\{B \in M_2(\mathbb{F}_p) \left|  B=\dfrac{A+ab \cdot I}{a^2-b} \right.,~ b^3=\det{A}, ~a^3-3ab = \tr{A} \right\}$.
\end{enumerate}
\end{prop}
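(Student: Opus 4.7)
The statement splits into three cases, and my plan is to handle each by a direct application of the Cayley--Hamilton theorem for $2 \times 2$ matrices.

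For case (1), when $A=0$ any $B$ with $B^3 = 0$ is nilpotent. In dimension $2$, the minimal polynomial of a nilpotent matrix divides $x^2$, so by Cayley--Hamilton the characteristic polynomial is $x^2$ itself, which immediately gives $\tr B = 0$ and $\det B = 0$. For case (2), if $A = c^3 I$ with $c \neq 0$, I would substitute $B' = c^{-1} B$, so $(B')^3 = c^{-3} A = I$, giving the claimed parametrization $B = c B'$.

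The central case is (3). I would write $a = \tr B$, $b = \det B$, apply Cayley--Hamilton to $B$ to get $B^2 = aB - bI$, multiply both sides by $B$, and substitute once more to obtain
\[
B^3 = aB^2 - bB = a(aB - bI) - bB = (a^2 - b)\, B - ab\, I.
\]
Setting this equal to $A$ and solving for $B$ yields the formula
\[
B = \frac{A + ab\, I}{a^2 - b},
\]
provided $a^2 - b \neq 0$. The constraint $b^3 = \det A$ follows from multiplicativity of the determinant, and $a^3 - 3ab = \tr A$ follows by taking the trace of the displayed identity for $B^3$ (or equivalently from the Newton identity $\tr(B^3) = (\tr B)^3 - 3(\det B)(\tr B)$ derived already in the preceding proposition).

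The main obstacle, and the only nontrivial point, is to verify that the hypothesis $A \neq cI$ for all $c$ guarantees that the denominator $a^2 - b$ is nonzero, so that the formula is well-defined. I would do this by contradiction: if $a^2 - b = 0$, then the identity above collapses to $B^3 = -ab\, I = -a^3 I$, so $A$ is a scalar matrix with cube root $-a$, contradicting the hypothesis of case (3). Thus the three cases are mutually exclusive and exhaust all possibilities, and in case (3) any cube root $B$ lies in the stated set, establishing the inclusion. No reverse inclusion is claimed, which simplifies the argument: we do not need to show that every triple $(a,b)$ satisfying $b^3 = \det A$ and $a^3 - 3ab = \tr A$ actually arises from a cube root in $M_2(\mathbb{F}_p)$.
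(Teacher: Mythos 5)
Your proposal is correct and follows essentially the same route as the paper: Cayley--Hamilton for the nilpotent case, the rescaling $B'=c^{-1}B$ for scalar $A$, and in the main case the identity $B^3=(a^2-b)B-ab\,I$ solved for $B$, with the degenerate denominator $a^2-b=0$ reduced to the scalar cases exactly as the paper does. No substantive differences to report.
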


\begin{proof}
\begin{enumerate}
\item If $B^3=0$, then it's minimal canceling polynomial is $X^2$ or $X$. By Celly Hamilton equation (C.H.E) $B^2 - \tr{B} \cdot B + \det{B} \cdot I = 0$, which leads us to $\tr{B}=0, \det{B}=0$;
\item If $B$ is a solution of $X^3 - c^3 \cdot I=0$, then it's easy to see that $B' = c^{-1} B$ is a solution of $X^3 - I=0$;
\item Consider C.H.E for $B$:
$$B^2 - \tr{B} \cdot B + \det{B} \cdot I = 0.$$
Multiplying last equation by $B$ we proceed with the following chain of transformations:
\begin{multline*}
 B^3=
(\tr{B}\cdot B - \det{B} \cdot I) \cdot B =
\tr{B} \cdot B^2 - \det{B}\cdot B =
\tr{B} (\tr{B}\cdot B - \det{B} \cdot I) - \det{B} \cdot B=\\
=(\tr{B})^2 \cdot B - \tr{B} \det{B} \cdot I - \det{B} \cdot B =
((\tr{B})^2 - \det{B}) \cdot B - \tr{B} \det{B} \cdot I.
\end{multline*}
If $(\tr{B})^2 - \det{B}=0$, then we obtain $A=B^3 =  -\tr{B} \det{B} \cdot I =(-\tr{B})^3 \cdot I$, which leads us to previous cases.\\
\\
Otherwise $(\tr{B})^2 - \det{B}\not=0$ and we express $B$:
$$ B = \dfrac{B^3+\tr{B}\det{B}~I}{(\tr{B})^2-\det(B)}$$

Now since $B^3=A$ we conclude $\det{A} = \det{B^3} = (\det{B})^3$ and hence $\det{B}$ is a root of polynomial $x^3 - \det{A} = 0$.

Last thing remaining is to find $\tr{B}$.\\
By computing trace from both sides of $A=((\tr{B})^2 - \det{B}) \cdot B - \tr{B} \det{B} \cdot I$ we get:
$$\tr{A} = (\tr{B})^3 -3 \tr{B} \det{B}$$
From which we conclude that $\tr{B}$ is a root of $x^3-3 \det{B} \cdot x-\tr{A}=0$.

\end{enumerate}
\end{proof}

In general case we define complete symmetric polynomial of $n$-th degree in two variables: $$h_n(x,y) = \sum\limits_{k=0}^n x^ky^{n-k}.$$
In view of the fundamental theorem of symmetric polynomials $\exists !$ polynomial $Q(x,y) \in \mathbb{F}_p[x,y]$, such that:
$Q(e_1, e_2) = h_n$, where $e_1 = x+y$, $e_2 = xy$ --- elementary symmetric polynomials.

Likewise we determine the power symmetric polynomial of $n$-th degree in two variables: $$p_n(x,y) = x^n + y^n.$$
And polynomial  $P(x,y) \in \mathbb{F}_p[x,y]$, such that $P(e_1, e_2) = p_n$.

\begin{thm}

 Let $n \geqslant 3$ and $A\in M_2(\mathbb{F}_p)$. If $A \not = c \cdot I$ for any $c \in \mathbb{F}_p$ and $R = \{B \in M_2(\mathbb{F}_p)  \mid B^n = A\}$  set of it's  $n$-th roots, then next inclusion follows:
$$R \subset \left\{B \in M_2(\mathbb{F}_p) \left|  B=\dfrac{A+b~ Q_{n-2}(a,b) \cdot I}{Q_{n-1}(a,b)} \right.,~ b^n =\det{A}, ~P_n(a,b) = \tr{A} \right\}$$


\end{thm}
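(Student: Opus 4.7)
The plan is to generalise the $n=3$ argument of the preceding Proposition by iterating Cayley--Hamilton on $B$. Set $a = \tr B$ and $b = \det B$; then C.H.E.\ gives $B^2 = aB - bI$. A straightforward induction on $n$, multiplying both sides of this identity by $B$, produces a representation
\begin{equation*}
B^n = \alpha_n\, B + \beta_n\, I,
\end{equation*}
where the scalar coefficients $\alpha_n, \beta_n \in \F_p[a,b]$ satisfy the coupled recurrence
\begin{equation*}
\alpha_{n+1} = a\alpha_n + \beta_n, \qquad \beta_{n+1} = -b\,\alpha_n, \qquad \alpha_1 = 1,\ \beta_1 = 0.
\end{equation*}
Eliminating $\beta_n$ collapses this to the scalar recurrence $\alpha_{n+1} = a\alpha_n - b\alpha_{n-1}$, whose characteristic polynomial is precisely $\chi_B(x) = x^2 - ax + b$; this is the content of the Lemma stated immediately before the theorem.

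Next I identify $\alpha_n$ with $Q_{n-1}(a,b)$. The two-variable complete homogeneous symmetric polynomials satisfy $h_k = e_1 h_{k-1} - e_2 h_{k-2}$ with the same initial data $h_0 = 1$, $h_1 = e_1$, so by the uniqueness clause in the definition of $Q_k$ one obtains $\alpha_n = Q_{n-1}(a,b)$ and $\beta_n = -b\, Q_{n-2}(a,b)$. Consequently
\begin{equation*}
A = B^n = Q_{n-1}(a,b)\, B - b\, Q_{n-2}(a,b)\, I,
\end{equation*}
and solving this linear equation for $B$ yields the formula in the statement, whenever $Q_{n-1}(a,b)$ is invertible in $\F_p$. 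The two numerical side--conditions are now read off directly: multiplicativity of the determinant gives $\det A = (\det B)^n = b^n$, and taking the trace of $A = B^n$ gives $\tr A = \mu_1^n + \mu_2^n = p_n(\mu_1, \mu_2) = P_n(a,b)$, where $\mu_1, \mu_2$ are the eigenvalues of $B$.

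The only genuine obstacle is ruling out the degenerate case $Q_{n-1}(a,b) = 0$. But if this vanished, the identity $A = Q_{n-1}(a,b)\, B - b\, Q_{n-2}(a,b)\, I$ would collapse to $A = -b\, Q_{n-2}(a,b)\, I$, i.e.\ a scalar matrix; this directly contradicts the standing hypothesis $A \neq c \cdot I$. Hence the denominator is automatically invertible under the assumption, the inversion is legal, and every $B \in R$ belongs to the right-hand set, proving the claimed inclusion.
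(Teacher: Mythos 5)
Your proof is correct, and its skeleton coincides with the paper's: reduce $B^n$ modulo the Cayley--Hamilton polynomial to get $A = s_nB + t_nI$, show the coefficients obey the linear recurrence with characteristic polynomial $x^2 - ax + b$, identify them with complete homogeneous symmetric polynomials, solve for $B$, and rule out the degenerate denominator by the hypothesis $A \neq cI$. The one place where you genuinely diverge is the identification $s_n = Q_{n-1}(a,b)$, $t_n = -b\,Q_{n-2}(a,b)$. The paper obtains it by passing to the algebraic closure, splitting into the case $\lambda_1 \neq \lambda_2$, $\det B \neq 0$ (where it solves the recurrence explicitly via Cramer's rule to get $s_n = h_{n-1}(\lambda_1,\lambda_2)$), and then extending to the degenerate cases by a ``continuity'' argument for $D_n = h_{n-1} - \tr B\, h_{n-2} + \det B\, h_{n-3}$ --- an argument that is shaky as stated over a finite field and really needs to be rephrased as a polynomial identity. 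You instead observe that $h_k = e_1 h_{k-1} - e_2 h_{k-2}$ holds identically, so $\alpha_n$ and $Q_{n-1}(a,b)$ satisfy the same recurrence in $\mathbb{F}_p[a,b]$ with the same initial data $\alpha_1 = 1 = Q_0$, $\alpha_2 = a = Q_1(a,b)$, and hence coincide as polynomials. This buys you a uniform argument with no case split and no appeal to continuity, and it is the cleaner route; the paper's explicit eigenvalue computation buys only the closed form $s_n = (\lambda_2^n - \lambda_1^n)/(\lambda_2 - \lambda_1)$, which is not needed for the statement. Everything else --- the side conditions $b^n = \det A$ and $\tr A = P_n(a,b)$, and the observation that $Q_{n-1}(a,b) = 0$ would force $A$ to be scalar --- matches the paper.
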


\begin{proof}

Let $B \in M_2(\mathbb{F}_p)$ be a root of equation $X^n = A$. Also consider it's C.H.E.\\ $c(X) = X^2 - \tr{B} X + \det{B} \cdot I$\\
\\
Then $X^n \underset{c(X)}  \equiv s_n X + t_n I$ for some $s_n, t_n \in \mathbb{F}_p$ and since $c(B) = 0$ we have
\begin{equation}\label{recformulaA} A = s_n B +t_n I. \end{equation}

Now we prove the following lemma

\begin{lem}\label{recpower}
Sequences $s_n$, $t_n$ satisfy recurrent equation with characteristic polynomial $c(x)$.

\end{lem}

\begin{proof}

$X^n = X \cdot X^{n-1} \underset{c(X)}  \equiv X \cdot (s_{n-1}X + t_{n-1} I) = s_{n-1}X^2 + t_{n-1} X \underset{c(X)}  \equiv s_{n-1}(\tr{B} X -\\- \det{B} \cdot I) + t_{n-1} X = (s_{n-1} \tr{B} + t_{n-1}) X - s_{n-1} \det{B} \cdot I$

Or by definition of $s_n$ and $t_n$:
\begin{equation}\label{rec} \begin{cases} s_n = \tr{B} ~s_{n-1} + t_{n-1}\\
t_n = - \det{B}~ s_{n-1}
\end{cases}\end{equation}

By summing up first expression multiplied by $\det{B}$ with the second one multiplied by $\tr{B}$ we get:
$$ \det{B}~ s_n + \tr{B}~ t_n = \det{B}~ t_{n-1} $$
or
$$ \det{B}~ s_n =  \det{B}~ t_{n-1} - \tr{B}~ t_n $$

Substituting into second equation of \eqref{rec} we obtain:
$$t_n-\tr{B}~t_{n-1}+\det{B}~t_{n-2}=0$$
Since $s_n $ and $t_n$ are linearly dependant it follows that $s_n$ satisfy the same recurrent.

\end{proof}


Since $X^1 \underset{c(X)}  \equiv X + 0 \cdot I$ and $X^2 \underset{c(X)}  \equiv \tr{B} X - \det{B} \cdot I$, we have $s_1 = 1, t_1 =0, s_2 = \tr{B}$ and $t_2 = - \det{B}$.

Consider algebraic closure of $\mathbb{F}_p$ that is $\widehat{\mathbb{F}_p}$. Let $\lambda_1, \lambda_2$ be roots of $c(x)$ in $\widehat{\mathbb{F}_p}$ (eigenvalues of B).

\begin{enumerate}

\item If $\lambda_1 \not=\lambda_2$ and $\lambda_1 \lambda_2 = \det{B} \not= 0$:

$$s_n = c_1 \lambda_1^n + c_2 \lambda_2^n, ~t_n = c_1' \lambda_1^n + c_2' \lambda_2^n$$

In cases $n= \overline {1,2}$ for $s_n$ we get:

$$\begin{cases} c_1 \lambda_1 + c_2 \lambda_2 = 1,\\
c_1 \lambda_1^2 + c_2 \lambda_2^2 = \tr{B}.
\end{cases}$$

Solving the system using Kramer's rule we obtain: $$c_1 = \dfrac{\lambda_2^2 - \lambda_2  \tr{B}}{\lambda_1 \lambda_2^2 - \lambda_1^2 \lambda_2} = -\dfrac{1 }{\lambda_2 - \lambda_1}, ~c_2 =  \dfrac{\lambda_1 \tr{B} - \lambda_2^2}{\lambda_1 \lambda_2^2 - \lambda_1^2 \lambda_2} =  \dfrac{1}{\lambda_2 - \lambda_1}$$

Substituting constants

\begin{equation} \label{s_n} s_n = \dfrac{\lambda_2^n - \lambda_1^n}{\lambda_2 - \lambda_1} = h_{n-1}(\lambda_1, \lambda_2). \end{equation}

In cases $n= \overline {1,2}$ for $t_n$ we get:

$$\begin{cases} c_1' \lambda_1 + c_2' \lambda_2 = 0,\\
c_1' \lambda_1^2 + c_2' \lambda_2^2 = - \det{B}
\end{cases}$$

Solving the system using Kramer's rule we obtain: $$c_1' = \dfrac{\lambda_2 \det{B}}{\lambda_1 \lambda_2^2 - \lambda_1^2 \lambda_2} = \dfrac{\lambda_2 }{\lambda_2 - \lambda_1}, ~c_2' = - \dfrac{\lambda_1 \det{B}}{\lambda_1 \lambda_2^2 - \lambda_1^2 \lambda_2} =  -\dfrac{\lambda_1 }{\lambda_2 - \lambda_1}$$

Similarly substituting constants in the second equation:

\begin{equation} \label{t_n}
t_n = \dfrac{\lambda_1^n\lambda_2-\lambda_1\lambda_2^n}{\lambda_2 - \lambda_1}= -\det{B} \cdot \dfrac{\lambda_1^{n-1}-\lambda_2^{n-1}}{\lambda_1-\lambda_2} = -\det{B}~ h_{n-2}(\lambda_1, \lambda_2) \end{equation}

\item In general case for each $n \ge 3$ we consider polynomial $D_n(\lambda_1, \lambda_2) = h_{n-1} - \tr{B} h_{n-2} + \det{B} h_{n-3}$. It's a continuous function of variables $\lambda_1, \lambda_2$.

Previously we proved that $D_n(\lambda_1, \lambda_2)=0$ if $\lambda_1 \not= \lambda_2$ and $\lambda_i \not = 0$.

From continuity follows that $D_n(\lambda_1, \lambda_2)=0$  for
 each $ \lambda_1, \lambda_2$ and hence formulas \eqref{s_n} and \eqref{t_n} are fulfilled for
 each $\lambda_1, \lambda_2$.

\end{enumerate}

Now that we have found $s_n$ and $t_n$ we return to equation \eqref{recformulaA}. If $s_n=0$, then $A = t_n I$ which contradicts conditions of the theorem. Dividing both sides by $s_n$ we get formula

\begin{equation}\label{recroot} B = \dfrac{A - t_n I}{s_n} = \dfrac{A + \det{B}~ h_{n-2}(\lambda_1, \lambda_2) \cdot I}{h_{n-1}(\lambda_1, \lambda_2)}=\dfrac{A + \det{B}~ Q_{n-2}(\tr{B}, \det{B}) \cdot I}{Q_{n-1}(\tr{B}, \det{B})}\end{equation}

The last thing remaining is to express $\det{B}$ and $\tr{B}$ in terms of $A$.

Since $\det{A} = \det{B^n} = \det{B}^n$, $\det{B}$ can be obtain as root of polynomial $x^n=\det{A}$.

To find $\tr{B}$ we compute trace from both sides of $B = \dfrac{A - t_n I}{s_n}$:
\begin{multline*} \tr{A} =  \tr{B}~ s_n +  2~ t_n = \tr{B}~ h_{n-1}(\lambda_1, \lambda_2) - 2 \det{B}~h_{n-2}(\lambda_1, \lambda_2) =\\=  h_{n}(\lambda_1, \lambda_2) -  \lambda_1\lambda_2~h_{n-2}(\lambda_1, \lambda_2) = \lambda_1^n+\lambda_2^n = p_n(\lambda_1, \lambda_2) = P_n(\tr{B}, \det{B}). \end{multline*}

\end{proof}

Let us check the recursive formula \eqref{recroot}, note that Lemma \eqref{recpower} entails equality ${{X}^{n}}=\left( {{s}_{n-1}}trB+{{t}_{n-1}} \right)X-{{s}_{n-1}}\det B\cdot I$ where the coefficients $s_n$ and  $t_n$ are determined by the system: 

$$\begin{cases} {{s}_{n}}={{s}_{n-1}}trB+{{t}_{n-1}} ,\\
{{t}_{n}}=-\det B\cdot {{s}_{n-1}},
\end{cases}$$

therefore, for root of 3-rd power $\sqrt[3]{A}$ can be expressed from the next equality
${{X}^{3}}=\left( {{s}_{2}}trB+{{t}_{2}} \right)X-{{s}_{2}}\det B\cdot I$,
which lead us to solution $X=\frac{{{X}^{3}}+{{s}_{2}}\det B\cdot I}{{{s}_{2}}trB+{{t}_{2}}}$.
Wherein ${{s}_{2}}=trB$, ${{t}_{2}}=-1=-\det B$, ${{t}_{1}}=0$.

Let 
${{{B}}}=\left( \begin{array}{rr}
   1\,\,\, &2 \\
  1\,\, & 3 \\
\end{array} \right)$ consequently $ B^4 = \left( \begin{array}{rr}
   41 \,\, &112 \\ 
  56 \,\,&153 \\ 
\end{array} \right)$   
then, by means of the recursive formula \eqref{recformulaA}
${{X}^{4}}=\left( {{s}_{3}}trB+{{t}_{3}} \right)X-{{s}_{3}}\det B\cdot I$ and ${{s}_{3}}={{s}_{2}}trB+{{t}_{2}}$, ${{t}_{3}}=-\det B{{s}_{2}}$
therein ${{s}_{2}}=trB=4$,  ${{s}_{3}}=4trB-4=15,\,\,{{t}_{4}}=-15$, ${{s}_{4}}=56$, ${{t}_{2}}=-1$, ${{t}_{1}}=0$ we successfully check our main formula \eqref{recroot} 

$$X=\frac{{{X}^{4}}+{{s}_{3}}I}{{{s}_{3}}trB+{{t}_{3}}}=\frac{{{B}^{4}}+15 I}{60+\left( -4 \right)}=
\left( \begin{array}{rr}
   56 \,\, &112 \\ 
  56 \,\,&168 \\ 
\end{array} \right):56 = \left( \begin{array}{rr}
   1 \,\,\, & 2 \\ 
  1 \,\, & 3 \\ 
\end{array} \right).$$

Next, using the recursive formula
${{s}_{5}}={{s}_{4}}trB+{{t}_{4}}=trB\cdot 56+\left( -15 \right)=209$, where ${{t}_{4}}=-15$, ${{s}_{4}}=56$. 

Provided that ${{B}^{4}}=\left( \begin{array}{rr}
   41 \,\, & 112 \\ 
  56 \,\, & 153 \\ 
\end{array} \right)$ and ${{B}^{5}}=\left( \begin{array}{cc}
    153 &  418\\
    209 & 571
\end{array} \right)$, 
then ${{s}_{4}}E=\left( \begin{matrix}
   56 & 0  \\
   0 & 56  \\
\end{matrix} \right)$ and $X=\frac{{{X}^{5}}+{{s}_{4}}E}{{{s}_{4}}trB+{{t}_{4}}}=\frac{{{B}^{4}}+56E}{trB\cdot 56+\left( -15 \right)}=\left( \begin{array}{rr}
   209\,\,\, & 418 \\ 
  209\,\,\, & 627 \\ 
\end{array} \right):209$ 
  therefore  $\sqrt[5]{A}=\left( \begin{array}{rr}
   1 \,\,\,\,\,\, & \frac{418}{209} \\ 
  \frac{209}{209}\,\, &  \frac{627}{209} \\ 
\end{array} \right)=\left( \begin{array}{rr}
   1\,\,\, & 2 \\ 
  1\,\,\, & 3 \\ 
\end{array} \right)$. 
Which fully confirms the formula under the conditions of this sequence.

\section{Analytical formula of square root in $SL_3{(\mathbb{Z}})$.}
Let $P={{\lambda }_{1}}{{\lambda }_{2}}+{{\lambda }_{1}}{{\lambda }_{3}}+{{\lambda }_{2}}{{\lambda }_{3}}$, then $\tr^2{(B)}- \tr{({B}^{2})}=2P$. Consider the matrix  equation in 
$SL_3{(\mathbb{Z}})$                 
\[~{{X}^{2}}=A. \]
If according to the theorem \ref{criterionSL(Z)} a solution lays in $SL_3{(\mathbb{Z}})$ then solution of previous equation is a matrix $B$ defined by the formula:
\begin{equation}\label{root3on3}
  B=\frac{{{A}^{2}}-\frac{\tr^2\left( B \right)+\tr{{\left( A \right)}}}{2}A- \tr\left( B \right)}{\left( 1-\frac{\tr^2 \left( B \right) - \tr{{\left( A \right)}}}{2}\tr B\right)}.  
\end{equation}


The characteristic polynomial  $-{{X}^{3}}+\tr\left( B \right){{X}^{2}}-PX+E=0$. 
Applying  Celly Hamilton Theorem for $B$ we have
$-{{B}^{3}}+\tr\left( B \right){{B}^{2}}-PB+E=0$ thence
${{B}^{3}}=\tr\left( B \right){{B}^{2}}-PB+E$.
Multiplying last equation on $B$ admit us obtain 
${{B}^{4}}=\tr\left( B \right){{B}^{3}}-P{{B}^{2}}+B$.

Left side of this equation can be brought into the form $tr\left( B \right)\left( tr\left( B \right){{B}^{2}}-PB+E \right)-P{{B}^{2}}+B={{B}^{2}}\left( tr{{\left( B \right)}^{2}}-P \right)+B\left( 1- P\tr\left( B \right) \right)+tr\left( B \right)$.
Taking into account the relation between  spectral invariant \cite{Klyach} that is bellow 
\[\tr^{2}{{\left( B \right)}}-P=\tr{{\left( B \right)}^{2}}-\frac{\tr{{\left( B \right)}^{2}}-\tr\left( {{B}^{2}} \right)}{2}=\frac{\tr{{\left( B \right)}^{2}}+\tr\left( {{B}^{2}} \right)}{2}=\frac{\tr{{\left( B \right)}^{2}}+\tr\left( A \right)}{2}.\]

Additionally we obtain $$ P= \frac{\tr^2 \left( B \right) - \tr{{\left( A \right)}}}{2}.$$

 Thus we can state relation between spectral invariants: 
$\tr{{\left( A \right)}^{2}}-P=\frac{\tr{{\left( B \right)}^{2}}+\tr\left( A \right)}{2}$.
Substituting this as coefficient of second power of matrix
${{B}^{4}}={{B}^{2}}\left(\tr{{\left( B \right)}^{2}}-P \right)+B\left( 1- P \tr\left( B \right) \right)+\tr\left( B \right)=\left( \frac{\tr{{\left( B \right)}^{2}}+\tr\left( {{B}^{2}} \right)}{2} \right){{B}^{2}}+\left( 1-\operatorname{P}\tr\left( B \right) \right)B+\tr\left( B \right)$.
It yields the  equality ${{B}^{4}}-\left( \frac{\tr{{\left( B \right)}^{2}}+\tr\left( {{B}^{2}} \right)}{2} \right){{B}^{2}}-\tr\left( B \right)=\left( 1-\operatorname{P}tr\left( B \right) \right)B$. 
Taking into account that $B^4=A^2$ and $B^2=A$, the last equality entails the formula of the root:
$B=\frac{{{A}^{2}}-\frac{tr\left( {{B}^{2}} \right)+tr^{2}{{\left( B \right)}}}{2}A-tr\left( B \right)}{\left( 1-\operatorname{P}tr\left( B \right) \right)}=\frac{{{A}^{2}}-\frac{tr\left( {{B}^{2}} \right)+tr^{2}{{\left( B \right)}}}{2}A-tr\left( B \right)}{\left( 1-\frac{tr^{2}\left( {{B}} \right)-tr{{\left(A \right)}}}{2}trB \right)}=\frac{{{A}^{2}}-\frac{tr\left( A \right)+tr^{2}{{\left( B \right)}}}{2}A-tr\left( B \right)}{\left( 1-\frac{tr^2\left( B \right)-tr{{\left( A \right)}}}{2}trB \right)}$.
Thus, we obtain the root formula
$$B=\frac{{{A}^{2}}-\frac{tr^2\left( B \right)+tr{{\left( A \right)}}}{2}A-tr\left( B \right)}{\left( 1-\frac{tr^2 \left( B \right) - tr{{\left( A \right)}}}{2}trB \right)}.$$

\begin{exm}
Let $B=\left( \begin{matrix}
   3 & 0 & 2  \\
   0 & 1 & 0  \\
   1 & 0 & 1  \\
\end{matrix} \right)$ then ${{B}^{2}}=A=\left( \begin{matrix}
   11 & 0 & 8  \\
   0 & 1 & 0  \\
   4 & 0 & 3  \\
\end{matrix} \right)$,  ${{B}^{4}}={{A}^{2}}=\left( \begin{matrix}
   153 & 0 & 112  \\
   0 & 1 & 0  \\
   56 & 0 & 41  \\
\end{matrix} \right)$. Also
 $trB=5$ and $trA=15$.
    \end{exm}

Let us calculate the values of intermediate quantities
$t{{r}^{2}}\left( {{B}^{2}} \right)=225$, $tr{{B}^{2}}=trA=15$,
$t{{r}^{2}}B-trA={{5}^{2}}-15=10$, $\left( t{{r}^{2}}B-trA \right)trB=\left( {{5}^{2}}-15 \right)5=50$. 
 Now we calculate the coefficient $\delta= {\left( 1-\frac{tr^2 \left( B \right) - tr{{\left( A \right)}}}{2}trB \right)}$ from the denominator of \eqref{root3on3}. For this goal $\frac{\left( \tr {^{2}}B-\tr A \right)}{2}\tr B=\frac{\left( {{5}^{2}}-15 \right)}{2}5=25$, $1-\frac{\left( \tr{^{2}} B-\tr A \right)}{2}\tr B=1-\frac{\left( {{5}^{2}}-15 \right)}{2}5=1-25=-24$ that is $\delta$ in the denominator.

The coefficient ${{\alpha }_{}}=\frac{t{{r}^{2}}B+trA}{2}$ in the numerator is equal
${{\alpha }_{}}=\frac{t{{r}^{2}}B+trA}{2}=20$ because of $t{{r}^{2}}B+trA=25+15=40$, thence $\alpha A=\left( \begin{matrix}
   220 & 0 & 160  \\
   0 & 20 & 0  \\
   80 & 0 & 60  \\
\end{matrix} \right)$.

To find a matrix in the nominator we calculate auxiliary matrix polynomial: 
\[{{A}^{2}}-\alpha A=\left( \begin{matrix}
   153 & 0 & 112  \\
   0 & 1 & 0  \\
   56 & 0 & 41  \\
\end{matrix} \right)-\left( \begin{matrix}
   220 & 0 & 160  \\
   0 & 20 & 0  \\
   80 & 0 & 60  \\
\end{matrix} \right)=-\left( \begin{matrix}
   67 & 0 & 48  \\
   0 & 19 & 0  \\
   24 & 0 & 19  \\
\end{matrix} \right)\]

So the matrix in nominator:
 ${{A}^{2}}-\alpha A-trA\cdot E=-\left( \begin{matrix}
   67 & 0 & 48  \\
   0 & 19 & 0  \\
   24 & 0 & 19  \\
\end{matrix} \right)-\left( \begin{matrix}
   5 & 0 & 0  \\
   0 & 5 & 0  \\
   0 & 0 & 5  \\
\end{matrix} \right)=-\left( \begin{matrix}
   72 & 0 & 48  \\
   0 & 24 & 0  \\
   24 & 0 & 24  \\
\end{matrix} \right)$

Dividing on $\delta=-24$ we obtain the final matrix
 $B=\left( \begin{matrix}
   3 & 0 & 2  \\
   0 & 1 & 0  \\
   1 & 0 & 1  \\
\end{matrix} \right).$

\setcounter{equation}{0}
\setcounter{figure}{0}

{\bf Conclusion.}
Size of minimal generated sets of $ESL(n, \mathbb{Z} )$ and $ESL(n, \mathbb{F}_p)$ as involutive as well as not involutive was found  by us in this research.

The Mazurov triples of involutions as the generator systems for $ESL(5, \mathbb{Z} )$ and $ESL(5, \mathbb{F}_p)$ are researched, the minimality of this triples is proved.

\small

 \textbf{Sources of Funding} for Research Presented in a Scientific Article or Scientific Article Itself
This work was partially supported by a grant from the \textbf{Simons Foundation} (\textbf{SFI-PD-Ukraine-00017674, Ruslan Skuratovskii}).

\normalsize
\end{document}